\DeclareSymbolFont{cyrletters}{OT2}{wncyr}{m}{n}
\DeclareMathSymbol{\Sha}{\mathalpha}{cyrletters}{"58}
\newtheorem{theorem}{Theorem}[section]
\newtheorem{conjecture}[theorem]{Conjecture}
\newtheorem{corollary}[theorem]{Corollary}
\newtheorem{definition}[theorem]{Definition}
\newtheorem{proposition}[theorem]{Proposition}
\newtheorem{remark}[theorem]{Remark}
\newenvironment{proof}[1][Proof]{\noindent\textbf{#1.} }{\ \rule{0.5em}{0.5em}}
\newdimen\dummy
\begin{document}

\title{{\LARGE Invariance of the parity conjecture for $p$-Selmer groups of
elliptic curves in a $D_{2p^{n}}$-extension}}
\author{Thomas de La Rochefoucauld}
\maketitle

\begin{abstract}
In section 2, we show a $p$-parity result in a $D_{2p^{n}}$-extension of
number fields $L/K$ ($p\geq 5$) for the twist $1\oplus \eta \oplus \tau $:%
\begin{equation*}
W(E/K,1\oplus \eta \oplus \tau )=(-1)^{\left\langle 1\oplus \eta \oplus \tau
,X_{p}(E/L)\right\rangle },
\end{equation*}%
where $E$ is an elliptic curve over $K,$ $\eta $ and $\tau $ are
respectively the quadratic character and an irreductible representation of
degree $2$ of $Gal(L/K)=D_{2p^{n}},$ and $X_{p}(E/L)$ is the $p$-Selmer
group. The main novelty is that we use a congruence result between $%
\varepsilon _{0}$-factors (due to Deligne) for the determination of local
root numbers in bad cases (places of additive reduction above 2 and 3). We
also give applications to the $p$-parity conjecture (using the machinery of
the Dokchitser brothers).
\end{abstract}

\section{The conjecture of Birch and Swinnerton-Dyer and the parity
conjecture}

Let $K$ be a number field and $E$ an elliptic curve defined over $K$. Denote
by $K_{v}$ the completion of $K$ at a place $v$.

We recall a few definitions:

\begin{definition}
(Tate Module):\newline
The $l$-adic Tate module of $E$ is the inverse limit of the system of
multiplication by $l$ maps $E[l^{n+1}]\longrightarrow E[l^{n}]$, where $E[m]$
denotes the kernel of multiplication by $m$ on $E.$\newline
Set $T_{l}(E)=\lim\limits_{\longleftarrow }E[l^{n}]$, $V_{l}(E)=\mathbb{Q}%
_{l}\otimes _{\mathbb{Z}_{l}}T_{l}(E)$ and:%
\begin{equation*}
\sigma _{E/K_{v},l}^{\prime }:Gal(\overline{K}_{v}/K_{v})\longrightarrow
GL(V_{l}(E)^{\ast }).
\end{equation*}
\end{definition}

\begin{definition}
Fix an embedding, $\iota :\mathbb{Q}_{l}\hookrightarrow \mathbb{C}$; we can
then associate to $\sigma _{E/K_{v},l}^{\prime }$ a complex representation $%
\sigma _{E/K_{v},l,\iota }^{\prime }$ of the Weil-Deligne group (see \cite%
{Roh1} \S 13).\newline
One can show that the isomorphism class of $\sigma _{E/K_{v}}^{\prime
}:=\sigma _{E/K_{v},l,\iota }^{\prime }$ is independent of the choice of $l$
and $\iota $ (see \cite{Roh1} \S 13, \S 14, \S 15).
\end{definition}

Denote by $L(E/K,s)$ the global $L$-function, product of local $L$-functions:%
\begin{equation*}
L(E/K,s)=\tprod\limits_{v\text{ finite}}L(E/K_{v},s)\left( =\tprod\limits_{v%
\text{ finite}}L(\sigma _{E/K_{v}}^{\prime },s)\right) ,
\end{equation*}%
defined for $\func{Re}(s)>\dfrac{3}{2}$ (see \cite{Roh1} \S 17 for the
correspondence between the classical definition of $L(E/K_{v},s)$ and the
one using $\sigma _{E/K_{v}}^{\prime }$) and by 
\begin{equation*}
\Lambda (E/K,s)=A(E/K)^{s/2}L(E/K,s)(2(2\pi )^{-s}\Gamma (s))^{[K:\mathbb{Q}%
]}\text{,}
\end{equation*}%
the "complete" $L$-function.

Recall the following classical conjectures:

\begin{conjecture}
(Birch and Swinnerton-Dyer: BSD):\newline
$ord_{s=1}\Lambda (E/K,s)=rk(E/K).$
\end{conjecture}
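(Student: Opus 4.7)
The final statement is the full Birch and Swinnerton-Dyer conjecture, which is of course one of the deepest open problems in arithmetic geometry, so a genuinely complete proof strategy is not available. I will instead outline the approach that yields the best known partial results, and indicate where the essential obstructions lie.

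The plan is to split the argument by the analytic rank $r_{\mathrm{an}} := \mathrm{ord}_{s=1}\Lambda(E/K,s)$. Over $K=\mathbb{Q}$, for $r_{\mathrm{an}}\leq 1$ one combines the Gross--Zagier height formula (which, for modular $E$, relates $L'(E,1)$ to the N\'eron--Tate height of a Heegner point coming from a suitable imaginary quadratic field) with Kolyvagin's Euler system of Heegner points. The Euler system argument forces the Selmer group attached to $E$ to be controlled by the Heegner class: when $L(E,1)\neq 0$ the Heegner contribution is torsion, forcing $\mathrm{rk}(E/\mathbb{Q})=0$ and $\Sha(E/\mathbb{Q})$ finite; when $L'(E,1)\neq 0$, Gross--Zagier provides a point of infinite order, and Kolyvagin bounds the Selmer group so that this point generates $E(\mathbb{Q})\otimes\mathbb{Q}$.

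Concretely, I would first invoke (known) modularity/automorphy results for $E/K$ so that $\Lambda(E/K,s)$ admits the expected analytic continuation and functional equation. Next I would construct Heegner-type cycles in a suitable CM extension $K'/K$, control their norms down to $K$ via the Euler system relations, and apply a Gross--Zagier formula over $K$ to translate nonvanishing of $L'(E/K,1)$ into the existence of a $K$-rational point of infinite order. Finally I would run Kolyvagin's descent on these classes to bound the $p$-Selmer groups $X_p(E/K)$ from above, matching analytic and algebraic ranks.

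The hard part will be everything beyond analytic rank $1$: there is no known mechanism to produce the required Mordell--Weil points or to bound Selmer groups when $\Lambda(E/K,s)$ vanishes to order $\geq 2$. Even in the rank $\leq 1$ case, subsidiary obstacles include the $p$-indivisibility hypotheses needed to activate Kolyvagin's Euler system at every prime $p$, the passage from each $p$-part of $\Sha$ to the full group, and Heegner ramification hypotheses that do not always hold for a given CM extension. This is precisely why the present paper retreats to the much weaker \emph{parity} consequence of BSD: granting the functional equation, one only has to compare the sign of $W(E/K)$ with the parity of $\mathrm{rk}(E/K)$, and this can be attacked by the Dokchitser-style regulator constant and local root number techniques that the rest of the paper will deploy, thereby sidestepping the need to construct points or to control Selmer groups absolutely.
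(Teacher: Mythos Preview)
Your proposal correctly recognizes the essential point: the statement in question is a \emph{conjecture}, not a theorem, and the paper makes no attempt to prove it. It is stated as Conjecture~1.3 purely to motivate the weaker parity and $p$-parity statements that follow; there is no proof in the paper to compare your outline against.

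Your sketch of the Gross--Zagier/Kolyvagin method for analytic rank $\leq 1$ over $\mathbb{Q}$ is an accurate summary of the best known partial results, and your diagnosis of the obstructions in higher rank is sound. You also correctly explain why the paper retreats to the parity conjecture and ultimately to the $p$-parity statement for the twist $1\oplus\eta\oplus\tau$. In short, there is no gap to flag and no alternative approach to contrast: the paper simply does not address the full BSD conjecture, and you were right not to pretend otherwise.
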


\begin{conjecture}
\label{conj eq funct}(Functional equation of $\Lambda :$ FE):\newline
$L(E/K,s)$ has a holomorphic continuation to $\mathbb{C}$ and there is a
number\newline
$W(E/K)=\tprod\limits_{v}W(E/K_{v})\in \{\pm 1\}$ such that:%
\begin{equation*}
\Lambda (E/K,s)=W(E/K)\Lambda (E/K,2-s)
\end{equation*}%
(see \cite{Roh1} \S 12 and \S 19 for the definition of $W(E/K_{v}):=W(\sigma
_{E/K_{v}}^{\prime })$ and \cite{Roh1} \S 21 p.157 for the functional
equation of $\Lambda $).
\end{conjecture}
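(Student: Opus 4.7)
The plan is to reduce the statement to the modularity of $E$ and then appeal to the analytic theory of automorphic $L$-functions. The strategy breaks into three movements: first, establish that $L(E/K,s)$ coincides (up to normalisation) with the $L$-function of a cuspidal automorphic representation $\pi$ of $\mathrm{GL}_{2}(\mathbb{A}_{K})$; second, invoke the known holomorphic continuation and functional equation for such $\Lambda(\pi,s)$; third, unpack the global $\varepsilon$-factor into local constituents matching the $W(E/K_{v})$ defined via $\sigma_{E/K_{v}}^{\prime}$.

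The first movement is by far the deepest. For $K=\mathbb{Q}$ it is the full modularity theorem of Wiles, Taylor--Wiles, and Breuil--Conrad--Diamond--Taylor. For $K$ totally real, work of Freitas--Le Hung--Siksek, building on potential modularity (Taylor) together with the standard modularity lifting machinery at the small primes $3$, $5$, $7$, covers essentially all elliptic curves. For a general number field the statement remains open; one would pass through the compatible system $\{V_{l}(E)^{\ast}\}_{l}$ and try to show it is automorphic via Langlands reciprocity, but no such proof is currently known in full generality.

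Granting modularity, the second movement is classical Jacquet--Langlands theory generalising Tate's thesis: $\Lambda(\pi,s)$ continues holomorphically and satisfies $\Lambda(\pi,s)=\varepsilon(\pi,s)\Lambda(\tilde{\pi},1-s)$. Self-duality of $\pi$, inherited from the Weil pairing $V_{l}(E)\cong V_{l}(E)^{\ast}\otimes\mathbb{Q}_{l}(1)$, forces $\tilde{\pi}\cong\pi$ after the appropriate twist, and after shifting the variable to centre the symmetry at $s=1$ this yields $\Lambda(E/K,s)=W(E/K)\Lambda(E/K,2-s)$ with $W(E/K)\in\{\pm 1\}$.

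The third movement, the factorisation $W(E/K)=\prod_{v}W(E/K_{v})$, comes from the standard local--global decomposition of automorphic $\varepsilon$-factors due to Deligne, together with the local Langlands correspondence for $\mathrm{GL}_{2}$ identifying the local automorphic $\varepsilon$-factor of $\pi_{v}$ at the centre with $W(\sigma_{E/K_{v}}^{\prime})$ as defined in \cite{Roh1}. The genuinely hard part of the plan is the modularity step; once $\pi$ is in hand, both the functional equation and the local factorisation follow from well-established automorphic machinery, which is why the paper retains the statement as a conjecture rather than a theorem over an arbitrary number field.
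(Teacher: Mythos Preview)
The statement you were asked to prove is labelled in the paper as a \emph{conjecture}, not a theorem, and the paper gives no proof of it. Immediately after stating it the author simply records the cases in which it is known (elliptic curves over $\mathbb{Q}$ via full modularity; elliptic curves over totally real fields via Wintenberger's potential modularity, which yields only meromorphic continuation) and then moves on. There is therefore no ``paper's own proof'' to compare your attempt against.

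Your write-up is an accurate sketch of the standard conditional argument: assume automorphy of the compatible system attached to $E$, then read off holomorphic continuation, the functional equation, and the local factorisation of the $\varepsilon$-constant from the Jacquet--Langlands and Deligne--Langlands theory for $\mathrm{GL}_{2}$. You also correctly flag that the modularity input is unavailable over a general number field, which is exactly why the paper leaves the statement conjectural. The one minor discrepancy is that you invoke Freitas--Le~Hung--Siksek for the totally real case, whereas the paper (written earlier) cites only Wintenberger's potential modularity and accordingly claims only a meromorphic continuation there; but this is a matter of chronology, not mathematics. In short: your proposal is a sound explanation of the expected proof strategy and its current limitations, and it aligns with the paper's own commentary, but there is nothing further to compare since the paper does not purport to prove the conjecture.
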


This conjecture is known in a few cases:

\begin{itemize}
\item For elliptic curves over $\mathbb{Q}$ thanks to modularity results on
elliptic curves due to Wiles, Taylor, Breuil, Diamond and Conrad

\item For elliptic curves over a totally real field $K$, we only know a
meromorphic continuation and the functional equation of $\Lambda $ thanks to
a potential modularity result of Wintenberger (see \cite{Wint}) together
with an argument of Taylor.
\end{itemize}

In general, conjecture \ref{conj eq funct} is not known.

The conjecture of Birch and Swinnerton-Dyer implies the following weaker
conjecture:

\begin{conjecture}
(BSD $\left( \func{mod}2\right) $)\newline
$\func{rk}(E/K)\equiv ord_{s=1}\Lambda (E/K,s)$ $\left( \func{mod}2\right) .$
\end{conjecture}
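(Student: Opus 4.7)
The plan is straightforward: the statement is offered as an immediate consequence of the full Birch and Swinnerton-Dyer conjecture, so the only content is the trivial implication. If one accepts the integral equality $\text{rk}(E/K) = \text{ord}_{s=1}\Lambda(E/K,s)$ predicted by BSD, then reducing both sides modulo $2$ yields the claim at once. No further analytic input is required beyond what BSD already presupposes, namely holomorphic continuation so that the order of vanishing at $s=1$ is well-defined.

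The real interest of the mod-$2$ form only emerges when it is coupled with the functional equation of Conjecture \ref{conj eq funct}. Writing $r = \text{ord}_{s=1}\Lambda(E/K,s)$ and expanding $\Lambda(E/K,1+u) = c\,u^{r} + O(u^{r+1})$ with $c \ne 0$, the identity $\Lambda(E/K,s) = W(E/K)\Lambda(E/K,2-s)$ forces $W(E/K) = (-1)^{r}$. Hence, under FE, BSD (mod 2) becomes equivalent to the \emph{parity conjecture} $W(E/K) = (-1)^{\text{rk}(E/K)}$, converting an analytic question about the order of vanishing of $\Lambda$ at $s=1$ into an arithmetic statement about a product of local root numbers.

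From this point of view the mod-$2$ reduction is not itself the difficulty; the obstacles that remain are (i) establishing FE in settings where it is not yet known, and (ii) pinning down $\text{rk}(E/K)$, or a suitable proxy for it, by geometric means. The strategy of the paper addresses (ii) by replacing the Mordell--Weil rank with $p$-Selmer data and invoking the Dokchitser brothers' framework of Brauer relations in the $D_{2p^{n}}$-extension $L/K$, reducing the global parity question to a tractable analysis of local root numbers place by place. I expect the genuine work to lie entirely there, not in the passage from BSD to its mod-$2$ shadow.
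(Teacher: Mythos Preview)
Your proposal is correct and matches the paper exactly: the paper does not prove this statement but merely introduces it as a conjecture, noting only that ``the conjecture of Birch and Swinnerton-Dyer implies the following weaker conjecture,'' which is precisely the trivial mod-$2$ reduction you give. Your additional remarks on combining with the functional equation to obtain the parity conjecture also mirror the paper's next step verbatim.
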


Combining it with the conjectural functional equation we get:

\begin{conjecture}
\label{conj parity}(Parity conjecture)\newline
$(-1)^{\func{rk}(E/K)}=W(E/K).$
\end{conjecture}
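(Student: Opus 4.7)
The plan is to combine the two preceding conjectures purely formally: BSD $(\bmod \, 2)$ gives $\text{rk}(E/K) \equiv \text{ord}_{s=1}\Lambda(E/K,s) \pmod{2}$, while the functional equation (Conjecture \ref{conj eq funct}) forces the parity of this order of vanishing to agree with the sign $W(E/K)$ by the symmetry about the central point $s=1$.

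Granting Conjecture \ref{conj eq funct}, the function $\Lambda(E/K,s)$ is holomorphic on $\mathbb{C}$, so I can write the Taylor expansion at the central point $s=1$ as $\Lambda(E/K,s) = c(s-1)^{r} + O((s-1)^{r+1})$, with $r = \text{ord}_{s=1}\Lambda(E/K,s)$ and $c\neq 0$. Substituting $s \mapsto 2-s$ gives $\Lambda(E/K,2-s) = (-1)^{r} c(s-1)^{r} + O((s-1)^{r+1})$. Comparing leading coefficients in the functional equation $\Lambda(E/K,s) = W(E/K)\Lambda(E/K,2-s)$ yields $c = (-1)^{r} W(E/K)\, c$, whence $W(E/K) = (-1)^{r}$. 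Feeding this into BSD $(\bmod\, 2)$ produces $(-1)^{\text{rk}(E/K)} = (-1)^{r} = W(E/K)$, as desired.

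The main obstacle here is not the derivation, which is a one-line symmetry argument, but the fact that both inputs are themselves deep open problems: BSD $(\bmod\, 2)$ is unknown in general, and Conjecture \ref{conj eq funct} is only established over $\mathbb{Q}$ via modularity, with just meromorphic continuation and the functional equation available over totally real fields via Wintenberger--Taylor. Thus the value of the parity conjecture as an independent target is precisely that it replaces an analytic invariant by an algebraic one and becomes accessible through Selmer-group techniques, such as those developed in the remainder of the paper for the $p$-Selmer group $X_{p}(E/L)$ in $D_{2p^{n}}$-extensions.
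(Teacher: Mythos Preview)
Your derivation is correct and matches the paper's own justification: the paper simply writes ``Combining it with the conjectural functional equation we get'' before stating Conjecture~\ref{conj parity}, and your argument spells out precisely this combination of BSD $(\bmod\,2)$ with the sign in the functional equation via the Taylor expansion at $s=1$. There is no further proof in the paper to compare against, since the statement is recorded as a conjecture rather than a theorem.
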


Tim and Vladimir Dokchitser showed that this conjecture is true assuming
that the $6^{\infty }$-part of the Tate-Shafarevich group of $E$ over $%
K(E[2])$ is finite (see \cite{Dok3} Th 7.1 p.20).

\begin{definition}
Selmer group:\newline
Let $X_{p}(E/K):=Hom_{\mathbb{Z}_{p}}(S(E/K,p^{\infty }),\mathbb{Q}_{p}/%
\mathbb{Z}_{p})\otimes _{\mathbb{Z}_{p}}\mathbb{Q}_{p},$\newline
where $S(E/K,p^{\infty }):=\lim\limits_{\underset{n}{\longrightarrow }%
}S(E/K,p^{n})$ is the $p^{\infty }$-Selmer group, sitting in an exact
sequence:%
\begin{equation*}
0\longrightarrow E(K)\otimes \mathbb{Q}_{p}/\mathbb{Z}_{p}\longrightarrow
S(E/K,p^{\infty })\longrightarrow \Sha_{E/K}[p^{\infty }]\longrightarrow 0
\end{equation*}
\end{definition}

If we let $\func{rk}_{p}(E/K):=\dim _{_{\mathbb{Q}_{p}}}X_{p}(E/K)=\func{rk}%
(E/K)+\func{cork}_{\mathbb{Z}_{p}}\Sha_{E/K}[p^{\infty }]$, a more
accessible form of the conjecture \ref{conj parity} is the following:

\begin{conjecture}
\label{conj p-parity}($p$-parity conjecture)\newline
$(-1)^{\func{rk}_{p}(E/K)}=W(E/K)$.
\end{conjecture}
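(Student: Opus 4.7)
The final statement is the general $p$-parity conjecture, which is open in complete generality, so by ``proving'' it I understand the task of laying out the strategy that the paper will execute to establish it in the specific cases named in the abstract (a $D_{2p^{n}}$-extension with $p\geq 5$, via the Dokchitser brothers' machinery).

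My plan is to attack the conjecture indirectly, by proving the corresponding equation for well-chosen self-dual twists and then invoking a Brauer-induction argument. Concretely, I would embed $E/K$ into a Galois extension $L/K$ with group $G=\mathrm{Gal}(L/K)=D_{2p^{n}}$ and, for the self-dual virtual representation $\rho=1\oplus\eta\oplus\tau$ appearing in the abstract, aim to establish the twisted identity
\begin{equation*}
W(E/K,\rho)=(-1)^{\langle\rho,X_{p}(E/L)\rangle}.
\end{equation*}
The Dokchitser framework then converts such twisted identities, applied to the ``Brauer relations'' inside $G$, into statements about $\mathrm{rk}_{p}(E/F)$ for intermediate fields $F$, and in favourable circumstances allows one to deduce the untwisted $p$-parity conjecture from cases known over $\mathbb{Q}$ by modularity.

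The algebraic side is the more tractable one. Since $p\geq 5$ is coprime to $|G|/p^{n}=2$, the $\mathbb{Q}_{p}[G]$-module $X_{p}(E/L)$ decomposes cleanly by $G$-characters, and the inner product $\langle\rho,X_{p}(E/L)\rangle$ can be read off from the $p$-Selmer groups of $E$ over the fixed fields corresponding to $1,\eta$ and $\tau$; standard restriction--corestriction arguments in Selmer theory control its parity. The global root number side factors as $W(E/K,\rho)=\prod_{v}W(E/K_{v},\rho)$, and at places of good, multiplicative, or potentially good reduction away from $2$ and $3$ the individual local root numbers can be computed by Rohrlich's explicit recipes and shown to give the right contribution.

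The main obstacle, and the real novelty, lies at places of additive reduction above $2$ and $3$, where the usual formulas for local root numbers are not directly applicable. My plan here would be to avoid a head-on computation and instead use Deligne's congruence result for $\varepsilon_{0}$-factors: two representations of the Weil--Deligne group that are sufficiently congruent $\pmod{p}$ have $\varepsilon_{0}$-factors whose ratio is a $p$-adic unit congruent to $1$, so the sign of the root number of the Galois representation attached to $E$ can be read off from that of an auxiliary, better-behaved representation chosen to match $\sigma_{E/K_{v}}^{\prime}$ modulo $p$. Matching this local input to the algebraic parity, place by place, and then assembling the local identities into the required global equation, is what I expect to occupy the technical heart of the argument.
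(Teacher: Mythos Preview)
The statement is a conjecture; the paper does not prove it, nor does it prove it ``in the specific cases named in the abstract.'' What the paper actually establishes is the \emph{twisted} identity $W(E/K,1\oplus\eta\oplus\tau)=(-1)^{\langle 1\oplus\eta\oplus\tau,X_{p}(E/L)\rangle}$ (Theorem~\ref{theo2}); the applications to the untwisted $p$-parity conjecture (Theorems 2.4 and 2.5) are strictly conditional, transferring the conjecture from smaller fields to larger ones, not deducing it outright from modularity. So your framing of the goal is already slightly off.

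On the method itself, two points deserve correction. First, the algebraic side is not handled by ``restriction--corestriction arguments in Selmer theory.'' The paper invokes the Dokchitser regulator-constant machinery (Theorem~1.14 of \cite{Dok2}) attached to the $G$-relation $\Theta:\{1\}-2D_{2}-C_{p}+2G$, which converts $(-1)^{\langle 1\oplus\eta\oplus\tau,X_{p}(E/L)\rangle}$ into $(-1)^{\mathrm{ord}_{p}C}$, where $C$ is an explicit product of local Tamagawa numbers and differential terms. The entire proof then becomes the purely local verification that $W(E/K_{v},\tau_{v})/W(E/K_{v},(1\oplus\eta)_{v})=(-1)^{\mathrm{ord}_{p}C_{v}}$ for each $v$.

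Second, your description of the Deligne step has the wrong object varying. The paper does not replace $\sigma'_{E/K_{v}}$ by an auxiliary representation congruent to it; rather it keeps $\sigma'_{E/K_{v}}$ fixed and compares the two \emph{twists} $\tau_{v}=I(\chi)$ and $(1\oplus\eta)_{v}=I(1)$. Since $\chi\equiv 1\pmod{1-\zeta_{p}}$, Deligne's congruence gives $\varepsilon_{0}(\sigma'_{E/K_{v}}\otimes\tau_{v})\equiv\varepsilon_{0}(\sigma'_{E/K_{v}}\otimes(1\oplus\eta)_{v})\pmod{1-\zeta_{p}}$; as both are signs (the inertia-fixed part vanishes in the additive case), they are equal. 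This yields the \emph{ratio} $W(E/K_{v},\tau_{v})/W(E/K_{v},(1\oplus\eta)_{v})=1$ directly, without ever computing either root number individually.
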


If $L/K$ is a finite Galois extension and $\tau $ is a self-dual $\overline{%
\mathbb{Q}}_{p}$-representation of $Gal(L/K)$ then there is an equivariant
form of the conjecture \ref{conj p-parity}:

\begin{conjecture}
($p$-parity conjecture for (self-dual) twists)\newline
$(-1)^{\left\langle \tau ,X_{p}(E/K)\right\rangle }=W(E/K,\tau )$, where $%
W(E/K,\tau )=\tprod\limits_{v}W(\sigma _{E/K_{v}}^{\prime }\otimes \limfunc{%
Res}\nolimits_{D_{v}}\tau )$\newline
(where $D_{v}\subset Gal(L/K)$ is the decomposition group at $v$).
\end{conjecture}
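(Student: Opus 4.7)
Since this statement is formulated as a conjecture and is open in the stated generality, my plan is to outline the strategy that yields the known partial results and the $D_{2p^{n}}$ case advertised in the abstract. The strategy, pioneered by Tim and Vladimir Dokchitser, is to reduce, via the inductive compatibility of both sides under induction of representations, to a manageable collection of Brauer relations among the representations of $Gal(L/K)$, and then verify the resulting identity place by place using explicit formulas for local root numbers and for local contributions to the $p$-Selmer rank.

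The first step is to record the two key inductivity statements. For $H \subseteq G = Gal(L/K)$ with fixed field $F$ and any self-dual representation $\rho$ of $H$, one has
\[
W(E/K, \mathrm{Ind}_{H}^{G}\rho) = W(E/F, \rho), \qquad \langle \mathrm{Ind}_{H}^{G}\rho, X_{p}(E/L) \rangle_{G} = \langle \rho, X_{p}(E/L) \rangle_{H}.
\]
The first equality is proved place by place using inductivity of local $\varepsilon$-factors (after matching local $L$-factors); the second is Frobenius reciprocity. Both sides of the desired identity are additive in $\tau$, so I would decompose $\tau$ into self-dual irreducibles, invoke Brauer induction to write each such irreducible as a virtual sum of characters induced from subgroups, and apply the inductivity above to reduce to a conjunction of untwisted $p$-parity statements (Conjecture \ref{conj p-parity}) over intermediate fields, which are known in many cases. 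For the specific self-dual twist $\mathbf{1}\oplus\eta\oplus\tau$ over $D_{2p^{n}}$ mentioned in the abstract, one exhibits an explicit Brauer relation among subgroups that produces the needed cancellations and leaves a concrete local identity to verify.

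At this point the problem is localized and what remains is a technical but tractable place-by-place comparison. At archimedean places and at finite places of good or multiplicative reduction, standard formulas yield the local identity without difficulty. The main obstacle, which the abstract flags as the novelty, is at finite places of additive reduction above $2$ and $3$, where closed-form formulas for $W(\sigma_{E/K_{v}}^{\prime}\otimes \mathrm{Res}_{D_{v}}\tau)$ are not generally available and direct computation becomes intractable. Here the plan is to invoke Deligne's congruence result between $\varepsilon_{0}$-factors of congruent Galois representations: rather than computing each individual root number, one shows that the relevant product of $\varepsilon_{0}$-factors is congruent, modulo roots of unity of controlled parity, to a tamer expression whose sign can be read off directly. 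This congruence-based local analysis at wildly ramified additive places is the step I expect to be genuinely difficult; everything else should be bookkeeping of Tamagawa-type invariants and regulator constants along the lines of the Dokchitsers' framework.
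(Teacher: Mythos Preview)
Your outline is broadly aligned with the paper's approach for the special case of $\mathbf{1}\oplus\eta\oplus\tau$ over a $D_{2p^{n}}$-extension (the conjecture itself is not proved in the paper), but there is one conceptual misstep and two omissions worth flagging.

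The misstep is the sentence where you ``invoke Brauer induction to write each self-dual irreducible as a virtual sum of characters induced from subgroups'' and thereby ``reduce to a conjunction of untwisted $p$-parity statements over intermediate fields.'' This is not how the argument runs, and in fact cannot run: the untwisted $p$-parity conjecture over the intermediate fields of $L/K$ is not known unconditionally, so such a reduction would be circular. What the Dokchitser machinery actually does with the $G$-relation $\Theta:\{1\}-2D_{2}-C_{p}+2G$ is \emph{not} to reduce the Selmer side to $p$-parity over subfields, but to express $(-1)^{\langle \sigma,X_{p}(E/L)\rangle}$ \emph{directly} as $(-1)^{\mathrm{ord}_{p}(C)}$ for a product $C$ of local Tamagawa-type factors (their regulator-constant formalism, Theorem~1.14 of \cite{Dok2}). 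The local identity one must then verify is $W(E/K_{v},\tau_{v})/W(E/K_{v},(1\oplus\eta)_{v})=(-1)^{\mathrm{ord}_{p}(C_{v})}$, with no reference to Selmer ranks over subfields.

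Two further points the paper relies on that your sketch does not mention. First, the paper reduces from $D_{2p^{n}}$ to $D_{2p}$ by an induction using Rohrlich's Galois invariance of local root numbers; this is not automatic from inductivity alone. Second, and more importantly for the Deligne step: the congruence is between $\varepsilon_{0}$-factors, not $\varepsilon$-factors, and to conclude that the two root numbers agree one needs $\varepsilon=\varepsilon_{0}$, i.e.\ that the inertia-invariants $(V\otimes V_{\sigma})^{I(v)}$ vanish. The paper checks this explicitly in the additive case (using that $\Phi$ and $C_{p}$ have coprime orders and $V^{\Phi}=0$); only then does Deligne's congruence modulo $(1-\zeta_{p})$ force two elements of $\{\pm1\}$ to coincide. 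Your phrase ``congruent, modulo roots of unity of controlled parity, to a tamer expression'' obscures this mechanism.
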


It is this last conjecture in a particular setting that will interest us for
the rest of the paper.

\section{Invariance of the parity conjecture in a $D_{2p^{n}}$-extension 
\label{section}}

\subsection{Statement of the main theorem and applications to the $p$-parity
conjecture}

Let $K$ be a number field, $E/K$ an elliptic curve and $L/K$ a finite Galois
extension such that $Gal(L/K)\simeq D_{2p^{n}},$ with $p\geq 5$ a prime
number.

$D_{2p^{n}}$ admits the following irreducible representations over $%
\overline{\mathbb{Q}}_{p}$:\newline
$\bullet $ $1$ the trivial representation\newline
$\bullet $ $\eta $ the quadratic character\newline
$\bullet $ $\frac{p^{n}-1}{2}$ irreducible representations of degree 2; they
are of the form,%
\begin{equation*}
I(\chi ):=Ind_{C_{p^{n}}}^{D_{2p^{n}}}(\chi )=I(\chi ^{-1}),
\end{equation*}%
where $\chi $ is a non-trivial character of $C_{p^{n}}$ ($I(1)=1\oplus \eta $
is reducible)$.$\newline
See for example \cite{Ser1} for the description of irreducible
representations of $D_{2p^{n}}$.

\medskip

Let $\tau =I(\chi )$ be such an irreducible representation of degree 2.

\begin{theorem}
\label{theo2}With the notation above and $p\geq 5$, we have the following
equality:%
\begin{equation*}
\frac{W(E/K,\tau )}{W(E/K,1\oplus \eta )}=\dfrac{(-1)^{\left\langle \tau
,X_{p}(E/L)\right\rangle }}{(-1)^{\left\langle 1\oplus \eta
,X_{p}(E/L)\right\rangle }}
\end{equation*}%
In other words, the $p$-parity conjecture for $E/K$ tensored by $1\oplus
\eta \oplus \tau $ holds:%
\begin{equation*}
W(E/K,1\oplus \eta \oplus \tau )=(-1)^{\left\langle 1\oplus \eta \oplus \tau
,X_{p}(E/L)\right\rangle }
\end{equation*}
\end{theorem}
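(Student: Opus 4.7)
The plan is to reduce the statement, via Frobenius reciprocity and inductivity of local $\varepsilon$-factors, to a place-by-place comparison over the fixed field $F := L^{C_{p^{n}}}$ of the cyclic subgroup of order $p^{n}$ in $D_{2p^{n}}$. Since $\tau = I(\chi)$ and $1\oplus\eta = I(1)$ are both induced from $C_{p^{n}} = \operatorname{Gal}(L/F)$, Frobenius reciprocity gives
\begin{equation*}
\langle \tau, X_{p}(E/L)\rangle - \langle 1\oplus\eta, X_{p}(E/L)\rangle = \langle \chi - 1, X_{p}(E/L)|_{C_{p^{n}}}\rangle,
\end{equation*}
and Deligne--Langlands inductivity of local $\varepsilon$-factors, combined with the cancellation of the abelian correction terms in the ratio of two inductions of one-dimensional characters, yields
\begin{equation*}
\frac{W(E/K,\tau)}{W(E/K,1\oplus\eta)} = \prod_{w}\frac{W(\sigma'_{E/F_{w}}\otimes\chi_{w})}{W(\sigma'_{E/F_{w}})},
\end{equation*}
the product running over places $w$ of $F$.

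Next I would decompose the arithmetic side into local contributions. For the cyclic $p$-power extension $L/F$ and the self-dual virtual character $\chi - 1$ of $\operatorname{Gal}(L/F)$, a standard cohomological calculation (in the style of the Dokchitsers and Mazur--Rubin) expresses $(-1)^{\langle \chi - 1, X_{p}(E/L)|_{C_{p^{n}}}\rangle}$ as a product over places $w$ of $F$ of explicit local invariants depending only on the reduction type of $E$ at $w$ and on $\chi_{w}$. With both sides of the desired identity thus factored over places of $F$, it suffices to prove, at every $w$, the local equality between the root-number ratio $W(\sigma'_{E/F_{w}}\otimes\chi_{w})/W(\sigma'_{E/F_{w}})$ and the local arithmetic sign produced by the decomposition.

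The local verification then splits by the reduction type of $E$ at $w$ and by the ramification of $\chi_{w}$. For archimedean places, places of good or multiplicative reduction, and places of additive reduction at residue characteristic $\geq 5$, the matching follows from Rohrlich's explicit formulas for local root numbers combined with the classical description of the local Selmer conditions; each of these cases reduces to a direct, if intricate, computation that I would organise into a short case table.

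The main obstacle is additive reduction at places $w$ above $2$ or $3$. There $\sigma'_{E/F_{w}}$ can be wildly ramified in a way beyond what Rohrlich's tables directly cover, and the twist by a potentially very ramified $\chi_{w}$ makes a direct computation of $W(\sigma'_{E/F_{w}}\otimes\chi_{w})$ intractable. The key new input, as announced in the abstract, is Deligne's congruence result for $\varepsilon_{0}$-factors: two representations of the local Weil group that agree modulo a sufficiently high power of $p$ have $\varepsilon_{0}$-factors congruent modulo the same power. The idea is to choose an auxiliary representation suitably congruent to $\sigma'_{E/F_{w}}$ whose $\varepsilon_{0}$-factor can be computed by hand, and then, using the assumption $p \geq 5$ so that a sign $\pm 1$ is already determined by its class modulo $p$, extract the sign of the required local root-number ratio. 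Assembling all local identities produces the global equality, which, by multiplicativity of $W$ and additivity of $\langle -, X_{p}(E/L)\rangle$, is equivalent to the stated $p$-parity identity for $E/K$ twisted by $1\oplus\eta\oplus\tau$.
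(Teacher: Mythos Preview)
Your reduction of the root-number side to a product over places of $F=L^{C_{p^n}}$ is correct (inductivity in degree zero does cancel the Langlands constants), and Frobenius reciprocity does what you say on the Selmer side. The difficulty is in your Step~4: the ``standard cohomological calculation'' you invoke for the cyclic $p$-extension $L/F$ is not available as a black box. The Dokchitser regulator-constant machinery rests on a nontrivial Brauer relation, and cyclic $p$-groups have none; the Mazur--Rubin arithmetic-local-constant formulas for $\mathbb{Z}/p$-extensions carry hypotheses on $E[p]$ that are not assumed here. The paper's approach is precisely to \emph{retain} the dihedral structure: it reduces first to $n=1$ (via Rohrlich's Galois invariance of root numbers), then applies the Dokchitsers' Theorem~1.14 to the relation $\Theta:\{1\}-2D_2-C_p+2D_{2p}$ in $D_{2p}$, which yields $(-1)^{\langle 1\oplus\eta\oplus\tau,\,X_p\rangle}=(-1)^{\mathrm{ord}_p C}$ with $C$ an explicit product of local Tamagawa data over places of $K$. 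By passing to $F$ you discard exactly the group-theoretic input that makes this step work. (Incidentally, $\chi-\mathbf{1}$ is not self-dual, since $\chi$ has odd order.)

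There is a second problem with moving the local verification to $F$. Over $K_v$ both $\tau_v$ and $(1\oplus\eta)_v$ are self-dual, so $W(E/K_v,\tau_v)$ and $W(E/K_v,(1\oplus\eta)_v)$ lie in $\{\pm1\}$; the paper then uses Deligne's congruence $\varepsilon_0(\sigma'_{E}\otimes\tau_v)\equiv\varepsilon_0(\sigma'_{E}\otimes(1\oplus\eta)_v)\pmod{1-\zeta_p}$ (coming from $\chi\equiv 1$) to conclude that the two signs are equal, since $p\ge 5$. Over $F_w$, by contrast, $\chi_w$ is a character of order $p^k>2$, so $\sigma'_{E/F_w}\otimes\chi_w$ is not self-dual and $W(\sigma'_{E/F_w}\otimes\chi_w)$ is a general root of unity, not $\pm1$; a congruence modulo $1-\zeta_p$ no longer determines it. Your plan to replace $\sigma'_{E/F_w}$ by an auxiliary congruent representation does not address this, because it is the \emph{twist}, not the elliptic-curve representation, that one wants to vary. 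The paper's organisation---compute over $K$, keep the dihedral twists, and compare $\tau_v$ with $(1\oplus\eta)_v$ directly---is what makes the Deligne congruence decisive at the bad additive places.
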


\begin{remark}
\label{cas des freres Dok}The Dokchitser brothers have shown that this
equality holds in two different cases:\newline
$\bullet $ \textit{In the case when }$p$\textit{\ is any prime number but
the elliptic curve }$E/K$\textit{\ has a cyclic decomposition group in all
additive places above }$2$\textit{\ and }$3$ (see\textit{\ }\cite{Dok2}
Th.4.2 (1) p.43).\textit{\smallskip }\newline
$\bullet $ \textit{In the case when }$p\equiv 3\mathit{\ }\left( \func{mod}%
4\right) $\textit{\ (without any additional assumption) using a strong
global }$p$-\textit{parity result over totally real fields }(see\textit{\ }%
\cite{Dok3} Prop.6.12 p.18).\textit{\smallskip }\newline
In particular, the statement of Thm.\ref{theo2} also holds for $p=3.$
Futhermore, this case can be proved without using the "painful calculation"
(of \cite{Dok2} p.30) in the case of additive reduction (see the appendix).%
\textit{\medskip }\newline
\textit{Here we prove the equality for all }$p\geq 5$\textit{\ (without any
additional assumption).}
\end{remark}

\begin{corollary}
\begin{equation*}
\dfrac{W(E/K,I(\chi ))}{(-1)^{\left\langle I(\chi ),X_{p}(E/L)\right\rangle }%
}\text{ does not depend on }\chi :C_{p^{n}}\longrightarrow \mathbb{C}^{\ast
}.
\end{equation*}
\end{corollary}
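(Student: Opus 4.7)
The plan is to deduce the corollary directly from Theorem \ref{theo2} by isolating the $\chi$-dependent quantity. For any non-trivial character $\chi : C_{p^n} \to \mathbb{C}^*$, the induced representation $\tau := I(\chi)$ is one of the irreducible $2$-dimensional representations of $D_{2p^n}$ to which Theorem \ref{theo2} applies.

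First, I would apply the theorem with this choice of $\tau$, obtaining
$$\frac{W(E/K,I(\chi))}{W(E/K,1\oplus \eta)} = \frac{(-1)^{\langle I(\chi),X_p(E/L)\rangle}}{(-1)^{\langle 1\oplus\eta,X_p(E/L)\rangle}}.$$
Since all four factors lie in $\{\pm 1\}$, I can cross-multiply to rewrite this as
$$\frac{W(E/K,I(\chi))}{(-1)^{\langle I(\chi),X_p(E/L)\rangle}} = \frac{W(E/K,1\oplus\eta)}{(-1)^{\langle 1\oplus\eta,X_p(E/L)\rangle}}.$$
The right-hand side depends only on $E$, $K$, $L$ and the quadratic character $\eta$, with no occurrence of $\chi$; this is exactly the claim of the corollary for non-trivial $\chi$.

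To handle the remaining case, note that if $\chi = 1$ then $I(1) = 1\oplus \eta$ by the decomposition recalled in the paper, so the ratio on the left-hand side above becomes tautologically equal to the right-hand side. Thus the common value of $W(E/K,I(\chi))/(-1)^{\langle I(\chi),X_p(E/L)\rangle}$ is the same for every character $\chi : C_{p^n} \to \mathbb{C}^*$.

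There is no genuine obstacle here: the corollary is a purely formal consequence of Theorem \ref{theo2}, amounting to a rearrangement that separates the $\chi$-dependent terms on one side of the equality from a fixed $\chi$-independent quantity on the other.
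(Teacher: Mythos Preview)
Your argument is correct and matches the paper's intent: the corollary is stated immediately after Theorem \ref{theo2} with no separate proof, precisely because it is the formal rearrangement you give---isolating the $\chi$-dependent ratio and observing that it equals the fixed quantity $W(E/K,1\oplus\eta)/(-1)^{\langle 1\oplus\eta,X_p(E/L)\rangle}$, with the trivial-character case handled by $I(1)=1\oplus\eta$.
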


Theorem \ref{theo2} is equivalent to the fact that Hypothesis 4.1 of \cite%
{Dok2} holds for any elliptic curve and any $p>3$ (using a result of the
Dokchitser brothers it is also true for $p=3$, see Remark \ref{cas des
freres Dok} above). Now using the machinery of the Dokchitser brothers (see
Th.4.3 and Th.4.5 in \cite{Dok2}) we have the following theorems:

\begin{theorem}
Let $K$ be a number field, $p\geq 3,$ and $E/K$ an elliptic curve. Suppose $%
F $ is a $p$-extension of a Galois extension $M/K,$ Galois over $K.\ $If the 
$p $-parity conjecture%
\begin{equation*}
(-1)^{\func{rk}_{p}E/L}=W(E/L)
\end{equation*}%
holds for all subfields $K\subset L\subset M,$ then it holds for all
subfields $K\subset L\subset F.$
\end{theorem}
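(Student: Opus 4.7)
The plan is to invoke directly the machinery of Theorems~4.3 and 4.5 of \cite{Dok2}, which is now applicable because Theorem~\ref{theo2} establishes their Hypothesis~4.1 for every $p \geq 3$ (the cases $p=3$ and $p \geq 5$ being covered respectively by Remark~\ref{cas des freres Dok} and by Theorem~\ref{theo2} itself). Here is how I would unpack the mechanics.

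First, I would reduce by induction on $[F : M]$ to the case of a single degree-$p$ extension $F/M$ that is Galois over $K$, using that a $p$-extension admits a filtration by intermediate $K$-Galois subfields with each successive quotient cyclic of order $p$.

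Second, fix any intermediate $L$ with $K \subset L \subset F$. By Frobenius reciprocity, the $p$-parity conjecture for $E/L$ is equivalent to
\begin{equation*}
(-1)^{\langle \mathrm{Ind}_{Gal(F/L)}^{Gal(F/K)} \mathbf{1},\, X_p(E/F)\rangle} = W\bigl(E/K,\, \mathrm{Ind}_{Gal(F/L)}^{Gal(F/K)} \mathbf{1}\bigr).
\end{equation*}
The key is to write the virtual representation $\mathrm{Ind}_{Gal(F/L)}^{Gal(F/K)} \mathbf{1}$ as an integer combination of two types of characters: (a) characters that factor through $Gal(M/K)$, and (b) inductions of the form $\mathrm{Ind}_H^{Gal(F/K)}(\mathbf{1} \oplus \eta \oplus \tau)$ where $H \subset Gal(F/K)$ has a $D_{2p^k}$-quotient and $\tau = I(\chi)$ is a $2$-dimensional irreducible on that quotient. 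This is the representation-theoretic content underlying \cite{Dok2} Th.~4.3, and is specific to the structure of $p$-extensions above $M/K$.

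Third, each summand of the decomposition contributes a parity identity. For the type-(a) terms, the identity follows from the assumed $p$-parity conjecture for subfields of $M$: by Frobenius reciprocity, $\mathrm{Ind}_{Gal(F/L')}^{Gal(F/K)}\mathbf{1}$ (with $K \subset L' \subset M$) pairs with $X_p(E/F)$ to give $\func{rk}_p(E/L')$ and has root number $W(E/L')$. For the type-(b) terms, the identity is exactly Theorem~\ref{theo2}, applied after pushing down from the fixed field of $H$. Combining these parity identities with the integer exponents from the decomposition yields the required equality for $E/L$.

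The main obstacle is concentrated entirely in the second step: the construction of the virtual-character decomposition. This is a purely representation-theoretic statement, not analytic, but it is the heart of the Dokchitsers' method, and it is what justifies why the dihedral input $\mathbf{1} \oplus \eta \oplus \tau$ is the right \textquotedblleft atomic\textquotedblright{} twist to feed into the global machinery.
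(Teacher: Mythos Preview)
Your proposal is correct and takes exactly the same approach as the paper: the paper's entire argument is the sentence immediately preceding the theorem, namely that Theorem~\ref{theo2} (together with the $p=3$ case from Remark~\ref{cas des freres Dok}) verifies Hypothesis~4.1 of \cite{Dok2}, after which the statement is Theorem~4.3 of \cite{Dok2} verbatim. Your subsequent unpacking of the Dokchitser mechanics goes beyond anything the paper does and is offered only as a sketch, with the hard representation-theoretic decomposition correctly deferred to \cite{Dok2}.
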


\begin{theorem}
Let $K$ be a number field, $p\geq 3,$ $E/K$ an elliptic curve and $F/K$ a
Galois extension. Assume that the $p$-Sylow subgroup $P$ of $G=Gal(F/K)$ is
normal and $G/P$ is abelian. If the $p$-parity conjecture holds for $E$ over 
$K$ and its quadratic extensions in $F,$ then it holds for all twists of $E$
by orthogonal representations of $G.$
\end{theorem}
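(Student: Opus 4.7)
The plan is to apply Theorem 4.5 of \cite{Dok2}, which establishes exactly this statement under their Hypothesis 4.1. So the real task is to verify that hypothesis is available, and then record how the Dokchitsers' machinery promotes it into the desired conclusion about arbitrary orthogonal twists.

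First I would observe that Hypothesis 4.1 of \cite{Dok2} is, in the present notation, precisely the $p$-parity conjecture for the twist $1\oplus \eta \oplus \tau$ over dihedral extensions with Galois group $D_{2p^{n}}$. For $p\geq 5$ this is Theorem \ref{theo2} of this paper, and for $p=3$ it follows from the Dokchitsers' result recalled in Remark \ref{cas des freres Dok}. Thus the hypothesis is available for every $p\geq 3$, which is what the present theorem requires.

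Next I would recall the shape of the reduction in \cite{Dok2}. Given $G$ with normal $p$-Sylow $P$ and abelian quotient $G/P$, one produces for every orthogonal representation $\rho$ of $G$ a Brauer-style identity in the representation ring writing $\rho$ as a $\mathbb{Z}$-linear combination of representations $Ind_{H}^{G}(\psi)$ in which $\psi$ is either (i) a character of order at most $2$, so that $H\cdot \ker \psi$ cuts out $K$ or a quadratic extension of $K$ in $F$, or (ii) a dihedral twist of the form $1\oplus \eta \oplus \tau$ on a subquotient isomorphic to some $D_{2p^{k}}$. Root numbers of such twists are multiplicative under induction, and the Selmer corank pairings $\langle \cdot ,X_{p}(E/F)\rangle$ are additive with the usual Frobenius-reciprocity identity, so the global equality $W(E/K,\rho)=(-1)^{\langle \rho ,X_{p}(E/F)\rangle }$ can be assembled from the corresponding equalities for each summand. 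The pieces of type (i) are supplied by the hypothesis of the theorem; the pieces of type (ii) are supplied by Theorem \ref{theo2} together with the $p=3$ case cited above.

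The main obstacle, from my perspective, is not analytic but purely representation-theoretic: producing the Brauer relation that reduces an arbitrary orthogonal $\rho$ to the two classes of building blocks above using only the structural hypothesis $P\trianglelefteq G$ with $G/P$ abelian. This is precisely the technical heart of Section 4 of \cite{Dok2}; once it is invoked, matching the root-number side with the Selmer-corank side summand by summand is mechanical, and the theorem follows without further computation.
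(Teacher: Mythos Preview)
Your proposal is correct and follows exactly the paper's own approach: the paper states this theorem as an immediate consequence of Theorem~4.5 of \cite{Dok2}, invoked once Hypothesis~4.1 of \cite{Dok2} has been verified via Theorem~\ref{theo2} (for $p\geq 5$) together with the Dokchitsers' own result for $p=3$ recalled in Remark~\ref{cas des freres Dok}. Your additional sketch of the representation-theoretic reduction inside \cite{Dok2} is accurate but goes beyond what the paper itself records.
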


\subsection{Reduction to the case of a $D_{2p}$-extension}

Here we reduce the demonstration of Theorem \ref{theo2} by an induction
argument together with the Galois invariance of root numbers due to Rohrlich
(see \cite{Roh3} Theorem 2), to the following statement:\medskip

\begin{proposition}
It is sufficient to prove Theorem \ref{theo2} in the case when $n=1$ (i.e. $%
Gal(L/K)\simeq D_{2p}$).
\end{proposition}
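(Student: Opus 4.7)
The plan is induction on $n$ with the base case $n=1$ given. Assume Theorem~\ref{theo2} for all $D_{2p^k}$-extensions with $k<n$, and let $\tau = I(\chi)$ with $\chi$ a non-trivial character of $C_{p^n} \subset G := Gal(L/K)$ of order $p^j$, $1 \leq j \leq n$.

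If $j<n$, then $\tau$ is the inflation of a faithful irreducible $I(\bar\chi)$ of $\overline G := Gal(L^N/K) \simeq D_{2p^j}$, where $N := C_{p^{n-j}} \trianglelefteq G$. Root numbers are invariant under inflation (they depend only on the Weil--Deligne representation), and Selmer multiplicities are inflation-invariant because the control theorem for $p^\infty$-Selmer groups identifies $X_p(E/L)^N$ with $X_p(E/L^N)$ as $\mathbb{Q}_p[\overline G]$-modules. Hence the identity for $\tau$ on $L/K$ reduces to the identity for $I(\bar\chi)$ on $L^N/K$, which holds by the inductive hypothesis.

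If $j=n$ (faithful case), set $\chi_1 := \chi|_{C_p} \neq 1$, pick a reflection $s \in G$, and let $H := \langle C_p, s\rangle \simeq D_{2p}$ (a non-normal subgroup of $G$) with fixed field $F := L^H$. The extension $L/F$ has Galois group $D_{2p}$, so the $n=1$ case of Theorem~\ref{theo2} applies to $L/F$ and $I(\chi_1)$: for the dimension-zero virtual representation $V := I(\chi_1) - 1 - \eta_H$ of $H$,
\[
W(E/F, V) = (-1)^{\langle V, X_p(E/L)\rangle_H}.
\]
Since $\dim V = 0$, Deligne's inductivity of $\varepsilon$-factors gives $W(E/K, Ind_H^G V) = W(E/F, V)$ with no correction terms, and Frobenius reciprocity gives $\langle Ind_H^G V, X_p(E/L)\rangle_G = \langle V, X_p(E/L)\rangle_H$. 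A direct Mackey/transitivity computation identifies $Ind_H^G I(\chi_1)$ with $\bigoplus_{\chi'} I(\chi')$, where $\chi'$ ranges over the $p^{n-1}$ extensions of $\chi_1$ to $C_{p^n}$ (all faithful on $C_{p^n}$); and $Ind_H^G(1 \oplus \eta_H) = 1 \oplus \eta \oplus 2\bigoplus_{I(\chi'')\text{ non-faithful}} I(\chi'')$.

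By Rohrlich's Galois invariance of root numbers (\cite{Roh3} Theorem~2), all faithful $2$-dimensional irreducibles of $G$ share a common root number $W_n := W(E/K, \tau)$ and a common Selmer multiplicity $m_n := \langle \tau, X_p(E/L)\rangle_G$; the non-faithful summands appear to even multiplicity and contribute trivially. Using that $p$ is odd, so $W_n^{p^{n-1}} = W_n$ and $p^{n-1} m_n \equiv m_n \pmod 2$, the combined identity collapses to $W_n / W(E/K, 1 \oplus \eta) = (-1)^{m_n + \langle 1 \oplus \eta, X_p(E/L)\rangle}$, which is exactly Theorem~\ref{theo2} for $\tau$. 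The main obstacle is this faithful case, requiring (i) the choice of the non-normal subgroup $H \simeq D_{2p}$ of $G$; (ii) systematic use of dimension-zero virtual representations so that Deligne's inductivity holds without error terms even though $F/K$ is not Galois; and (iii) Rohrlich's invariance to identify the $p^{n-1}$ Galois-conjugate faithful summands of $Ind_H^G I(\chi_1)$ with a single pair $(W_n, m_n)$, where the oddness of $p$ is used critically.
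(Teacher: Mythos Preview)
Your strategy matches the paper's: induct on $n$, treating the faithful case by restriction to a dihedral subgroup (using inductivity of root numbers, Frobenius reciprocity, and Rohrlich's Galois invariance) and the non-faithful case by inflation from a dihedral quotient. Your case~2 is correct and in fact slightly more direct than the paper's version: you restrict all the way to $H\simeq D_{2p}$ and invoke the base case immediately, whereas the paper restricts only to the subgroup $D_{2p^{N-1}}$ and applies the inductive hypothesis one level down. The decomposition of $\mathrm{Ind}_H^G\bigl(I(\chi_1)\ominus 1\ominus\eta_H\bigr)$ and the use of Galois invariance to collapse the $p^{n-1}$ conjugate faithful summands into a single pair $(W_n,m_n)$ are exactly the right moves.

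Your case~1, however, contains a genuine gap. The claim that ``the control theorem for $p^\infty$-Selmer groups identifies $X_p(E/L)^N$ with $X_p(E/L^N)$ as $\mathbb{Q}_p[\overline G]$-modules'' is not valid in this generality: Mazur--Greenberg control theorems require reduction hypotheses (typically good ordinary reduction at primes above $p$) that are not assumed here, and even when they apply they relate $X_p(E/L^N)$ to the \emph{coinvariants} $X_p(E/L)_N$, with the cokernel of restriction on Selmer groups governed by local terms that need not vanish. The paper is itself maximally terse at the corresponding step --- it writes only ``the statement is known by the induction hypothesis'' --- so you are not overlooking an argument the paper actually supplies; but your explicit justification is incorrect as stated. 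One route around this, avoiding any control theorem, is to use the Dokchitsers' regulator-constant formula (Theorem~1.14 of \cite{Dok2}, quoted in the paper): for a Brauer relation $\Theta$ inflated from $\overline G=G/N$, every subgroup occurring in $\Theta$ contains $N$, so the constant $C$ is computed from Tamagawa data over subfields of $L^N$ and is literally the same for $(G,L/K)$ as for $(\overline G,L^N/K)$; hence $(-1)^{\langle 1\oplus\eta\oplus\tau,\,X_p(E/L)\rangle_G}=(-1)^{\mathrm{ord}_pC}=(-1)^{\langle 1\oplus\overline\eta\oplus\overline\tau,\,X_p(E/L^N)\rangle_{\overline G}}$ without ever comparing the two Selmer groups directly.
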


\begin{proof}
Suppose Theorem \ref{theo2} is true for $n=N-1.$ We will show that theorem
is true for $n=N$.\newline
Consider $L/K$ a finite Galois extension such that $Gal(L/K)\simeq
D_{2p^{N}} $ and $\tau $ an irreducible representation of degree $2$ of $%
D_{2p^{N}}.$\newline
$\bullet $ If $\chi $ is not injective, then the statement is known by the
induction hypothesis.\newline
$\bullet $ If $\chi $ is injective:\newline
Let $\sigma =res(I(\chi )):=res_{D_{2p^{N-1}}}^{D_{2p^{N}}}(I(\chi ))$.%
\newline
Then $\sigma =I(\chi ^{\prime })$, where $\chi ^{\prime }:=\chi _{\left\vert
C_{p^{N-1}}\right. }:C_{p^{N-1}}\rightarrow \overline{\mathbb{Q}}_{p}$ is
injective.\newline
We have: $Ind_{D_{2p^{N-1}}}^{D_{2p^{N}}}(\sigma )=\tbigoplus\limits_{\chi
_{0}}I(\chi _{0})$, where the sum is taken over the $\chi _{0}$ such that $%
\chi _{0\left\vert C_{p^{N-1}}\right. }=\chi _{\left\vert C_{p^{N-1}}\right.
}.$\newline
For each such $\chi _{0}$ there is an element of $Aut(\mathbb{C)}$ sending $%
\chi $ into $\chi _{0}$ and $I(\chi )$ into $I(\chi _{0}).$\newline
By inductivity of root numbers in Galois extension:%
\begin{equation*}
W(E/K,\sigma )=W(E/K,Ind_{D_{2p^{N-1}}}^{D_{2p^{N}}}(\sigma )).
\end{equation*}%
By Galois invariance of root numbers:%
\begin{equation*}
W(E/K,I(\chi ^{\prime }))=W(E/K,I(\chi _{0}))\text{, }\forall \chi _{0}\text{
such that }\chi _{0\left\vert C_{p^{N-1}}\right. }=\chi _{\left\vert
C_{p^{N-1}}\right. }.
\end{equation*}%
So $W(E/K,\sigma )=W(E/K,Ind_{D_{2p^{N-1}}}^{D_{2p^{N}}}(\sigma
))=W(E/K,\tau )^{p}=W(E/K,\tau ).$\newline
On the other hand,\newline
$\left\langle \sigma ,X_{p}(E/L)\right\rangle =\left\langle
Ind_{D_{2p^{N-1}}}^{D_{2p^{N}}}(\sigma ),X_{p}(E/L)\right\rangle
=p.\left\langle \tau ,X_{p}(E/L)\right\rangle $,\newline
because $X_{p}(E/L)$ is a $\mathbb{Q}_{p}$-representation$.$\newline
So $(-1)^{\left\langle 1\oplus \eta \oplus \sigma ,X_{p}(E/L)\right\rangle
}=(-1)^{\left\langle 1\oplus \eta \oplus \tau ,X_{p}(E/L)\right\rangle }.$%
\newline
By the induction hypothesis, $(-1)^{\left\langle 1\oplus \eta \oplus \sigma
,X_{p}(E/L)\right\rangle }=W(E/K,\sigma )$. As a result,\newline
$W(E/K,1\oplus \eta \oplus \tau )=(-1)^{\left\langle 1\oplus \eta \oplus
\tau ,X_{p}(E/L)\right\rangle }.$
\end{proof}

\subsection{The case of a $D_{2p}$-extension}

We first restate Theorem \ref{theo2} in the case of a $D_{2p}$-extension.

Let $K$ be a number field, $E/K$ an elliptic curve and $L/K$ a Galois
extension such that $Gal(L/K)\simeq D_{2p}\simeq C_{p}\rtimes C_{2},$ with $%
p\geq 5$ a prime number.

Recall the irreducible representations of $D_{2p}$ over $\overline{\mathbb{Q}%
}_{p}$:\newline
$\bullet $ $1$ the trivial representation\newline
$\bullet $ $\eta $ the quadratic character\newline
$\bullet $ $I(\chi )$ irreducible representations of degree 2, where $\chi $
is a non-trivial character of $C_{p}$.

\begin{theorem}
\label{theo}With the notation above and $p\geq 5$, we have the following
equality:%
\begin{equation*}
\frac{W(E/K,\tau )}{W(E/K,1\oplus \eta )}=\dfrac{(-1)^{\left\langle \tau
,X_{p}(E/L)\right\rangle }}{(-1)^{\left\langle 1\oplus \eta
,X_{p}(E/L)\right\rangle }}.
\end{equation*}%
In other words, the $p$-parity conjecture for $E/K$ tensored by $1\oplus
\eta \oplus \tau $ holds:%
\begin{equation*}
W(E/K,1\oplus \eta \oplus \tau )=(-1)^{\left\langle 1\oplus \eta \oplus \tau
,X_{p}(E/L)\right\rangle }.
\end{equation*}
\end{theorem}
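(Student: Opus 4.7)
The plan is to express both sides of the desired identity as products over the places of $K$ and verify the local equality at each $v$; the decisive new input will be a congruence between $\varepsilon_{0}$-factors, due to Deligne, which handles the places of additive reduction above $2$ and $3$.

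On the left, Conjecture \ref{conj eq funct} already writes $W(E/K,1\oplus \eta \oplus \tau )$ as the product over $v$ of the local root numbers $W(\sigma'_{E/K_{v}}\otimes \mathrm{Res}_{D_{v}}(1\oplus \eta \oplus \tau ))$. To put the arithmetic side in the same form I would invoke the regulator-constant/Brauer-relation machinery of \cite{Dok2}. For $G=D_{2p}$ the permutation characters $\mathbb{C}[G/\{1\}]$, $\mathbb{C}[G/C_{2}]$, $\mathbb{C}[G/C_{p}]$, $\mathbb{C}[G/G]$ satisfy a Brauer relation, and the virtual representation $1\oplus \eta \oplus \tau $ has trivial determinant. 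Invoking the Dokchitsers' formula for parities of Selmer ranks along such a relation, the exponent $\langle 1\oplus \eta \oplus \tau ,X_{p}(E/L)\rangle $ becomes, modulo $2$, a finite sum of local contributions at the places of $K$, expressible in terms of Tamagawa numbers, Kummer indices $|E(M_{w})/pE(M_{w})|$ and period ratios for the intermediate subfields of $L/K$. The theorem then reduces to a local identity at every place $v$.

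The key observation driving the analysis of this local identity is that $\chi $ takes values in $1+\mathfrak{m}\subset \overline{\mathbb{Z}}_{p}$, since $\chi ^{p}=1$ and $p\geq 5$, so that as representations with $\overline{\mathbb{Z}}_{p}$-coefficients
\begin{equation*}
\mathrm{Res}_{D_{v}}\tau \;\equiv \;\mathrm{Res}_{D_{v}}(1\oplus \eta )\pmod{\mathfrak{m}}.
\end{equation*}
At archimedean places, places of good reduction, multiplicative reduction, and additive reduction in residue characteristic $\neq 2,3$, the twisted local root numbers are given by explicit formulas of Rohrlich (and Deligne/Sabitova in the tame additive case); matching them with the corresponding local Selmer terms is a case analysis on the decomposition group $D_{v}\in \{\{1\},C_{2},C_{p},D_{2p}\}$ and on the reduction type.

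The main obstacle, and the novelty of the paper, is the remaining case of additive reduction at places above $2$ or $3$, where $\sigma'_{E/K_{v}}$ is wildly ramified and no closed formula for its twisted root number is available. Rather than computing the individual root numbers, I would use Deligne's theorem that the $\varepsilon _{0}$-factors of two $\overline{\mathbb{Z}}_{p}$-Galois representations congruent modulo $\mathfrak{m}$ are themselves congruent modulo $\mathfrak{m}$. Applied to the congruence above, this yields
\begin{equation*}
\varepsilon _{0}(\sigma'_{E/K_{v}}\otimes \mathrm{Res}_{D_{v}}\tau )\;\equiv \;\varepsilon _{0}(\sigma'_{E/K_{v}}\otimes \mathrm{Res}_{D_{v}}(1\oplus \eta ))\pmod{\mathfrak{m}},
\end{equation*}
so that the ratio of the corresponding self-dual root numbers is a sign congruent to $1$ modulo a prime above $p\geq 5$, and therefore equals $+1$. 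Matching this with the corresponding ratio of local Selmer terms (which one checks directly is also $+1$) yields the local equality at the bad places and completes the proof.
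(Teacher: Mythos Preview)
Your outline is essentially the paper's own proof: reduce via the Dokchitsers' Brauer-relation machinery to a local identity, dispatch the easy decomposition groups and the tame additive places with Rohrlich's formulas, and at the additive places above $2$ and $3$ use Deligne's congruence $\varepsilon_{0}(\sigma'_{E/K_{v}}\otimes\tau_{v})\equiv\varepsilon_{0}(\sigma'_{E/K_{v}}\otimes(1\oplus\eta)_{v})\pmod{1-\zeta_{p}}$ together with the fact that two signs congruent modulo a prime above $p\geq 5$ must coincide.

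One step you should make explicit: Deligne's congruence is for $\varepsilon_{0}$, not for $\varepsilon$, and $W(E/K_{v},\sigma)=\varepsilon_{0}(\sigma'_{E/K_{v}}\otimes\sigma,\psi,dx_{\psi})\cdot\det\bigl(-\phi\mid (V\otimes V_{\sigma})^{I(v)}\bigr)$. To conclude equality of root numbers you need $(V\otimes V_{\sigma})^{I(v)}=0$ for $\sigma=\tau_{v}$ and $\sigma=(1\oplus\eta)_{v}$; the paper checks this by observing that in the additive case $V^{\Phi}=0$ (where $\Phi$ is the inertia image for $E$), and since $|\Phi|$ is prime to $p$ while the twist factors through $C_{p}$, the invariants of the tensor product vanish. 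Without this, the passage from the $\varepsilon_{0}$-congruence to the root-number equality is not justified.
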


The proof of Theorem \ref{theo} will occupy the rest of section \ref{section}%
.

\bigskip

We use the following notation:\newline
$\bullet $ $v$ a finite place of $K$\newline
$\bullet $ $K_{v}$ the completion of $K$ at $v$\newline
$\bullet $ $q_{v}=l_{v}^{r}$ the cardinality of the residue field of $K_{v}$%
\newline
$\bullet $ $z$ $\mid v$ a finite place of $L$\newline
$\bullet $ $w\mid v$ a finite place of $L^{H}$ (where $H$ is a subgroup of $%
Gal(L/K)=D_{2p}$)\newline
$\bullet $ $\delta =ord_{v}\left( \text{the minimal discriminant of }%
E/K_{v}\right) $\newline
$\bullet $ $\delta _{H}=ord_{w}\left( \text{the minimal discriminant of }%
E/\left( L^{H}\right) _{w}\right) $\newline
$\bullet $ $e_{H}$ the ramification index of $\left( L^{H}\right) _{w}/K_{v}$%
\newline
$\bullet $ $f_{H}$ the residue degree of $\left( L^{H}\right) _{w}/K_{v}$%
\newline
$\bullet $ $\omega _{E/K_{v}}^{0}=$ a minimal invariant differential of $%
E/K_{v}$\newline
$\bullet $ $C_{w}(E/L^{H})=c_{w}(E/L^{H})\omega (H),$\newline
where $%
\begin{cases}
c_{w}(E/L^{H})=\text{ local Tamagawa factor of }E/L^{H} \\ 
\omega (H)=\left\vert \frac{\omega _{E/K_{v}}^{0}}{\omega _{E/\left(
L^{H}\right) _{w}}^{0}}\right\vert _{\left( L^{H}\right) _{w}}%
\end{cases}%
$

Furthermore, if $l_{v}>3$ then:\newline
\begin{tabular}{ll}
$\left\vert \frac{\omega _{E/K_{v}}^{0}}{\omega _{E/\left( L^{H}\right)
_{w}}^{0}}\right\vert _{\left( L^{H}\right) _{w}}$ & $=q^{\frac{\delta
.e_{H}-\delta _{H}}{12}f_{H}}$ \\ 
& $=q^{\left\lfloor \frac{\delta .e_{H}}{12}\right\rfloor f_{H}}$ (in the
case of potentially good reduction)%
\end{tabular}

\medskip

For $D_{2p}$, there is the following equality: $Ind_{\left\{ 1\right\}
}^{D_{2p}}1-2.Ind_{D_{2}}^{D_{2p}}1-Ind_{C_{p}}^{D_{2p}}1+2.1=0$ of virtual
representations of $G$, this gives the $G$-relation $\Theta :\left\{
1\right\} -2D_{2}-C_{p}+2G$ in the sense of \cite{Dok2} (Def 2.1 p.11).

\medskip

We recall two definitions in our setting (i.e. with $\Theta :\left\{
1\right\} -2D_{2}-C_{p}+2D_{2p}$), for general definitions see \cite{Dok2}.

\begin{definition}
(\cite{Dok2}, Def.2.13 p.14): Let $\rho $ be a self-dual $\mathbb{Q}_{p}[G]$%
-representation.\newline
Pick a $G$-invariant non-degenerate $\mathbb{Q}_{p}$-linear pairing $%
\left\langle ,\right\rangle $ on $\rho $ and set\newline
$C_{\Theta }(\rho )=\det (\left\langle ,\right\rangle \left\vert \rho
^{\left\{ 1\right\} }\right. )\det (\tfrac{1}{2}\left\langle ,\right\rangle
\left\vert \rho ^{D_{2}}\right. )^{-2}\det (\tfrac{1}{p}\left\langle
,\right\rangle \left\vert \rho ^{C_{p}}\right. )^{-1}\det (\tfrac{1}{2p}%
\left\langle ,\right\rangle \left\vert \rho ^{D_{2p}}\right. )^{2}.$\newline
As an element of $\mathbb{Q}_{p}^{\ast }/\mathbb{Q}_{p}^{\ast 2}$, this does
not depend on the choice of the pairing.
\end{definition}

\begin{definition}
(\cite{Dok2}, Def.2.50 p.23) We define:%
\begin{equation*}
T_{\Theta ,p}=\left\{ 
\begin{tabular}{l}
$\sigma $ a self-dual $\overline{\mathbb{Q}}_{p}[G]\text{-}$ \\ 
$\text{representation}$%
\end{tabular}%
\left\vert 
\begin{tabular}{l}
$\left\langle \sigma ,\rho \right\rangle \equiv ord_{p}C_{\Theta }(\rho )$ $%
\left( \func{mod}2\right) $ \\ 
$\forall \rho \text{ a self-dual }\mathbb{Q}_{p}[G]\text{-representation}$%
\end{tabular}%
\right. \right\}
\end{equation*}
\end{definition}

\medskip

Following the approach of the Dokchitser brothers, we have the following
theorem

\begin{theorem}
(Theorem 1.14 of \cite{Dok2}). Let $L/K$ be a Galois extension of number
fields with Galois group $G=D_{2p}$, where $p>2$ is a prime number. Let $%
\Theta :\left\{ 1\right\} -2D_{2}-C_{p}+2D_{2p}$. For every elliptic curve $%
E/K,$ the $\mathbb{Q}_{p}[G]$-represention $X_{p}(E/L)$ is self-dual, and%
\begin{equation*}
\forall \sigma \in T_{\Theta ,p},\quad(-1)^{\left\langle \sigma
,X_{p}(E/L)\right\rangle }=(-1)^{ord_{p}(C)},
\end{equation*}%
where $C=\tprod\limits_{v\nmid \infty }C_{v}$ with $C_{v}=C_{v}(\{1%
\})C_{v}(D_{2})^{-2}C_{v}(C_{p})^{-1}C_{v}(G)^{2}$\newline
and $C_{v}(H)=\tprod\limits_{\underset{w\text{ places of }L^{H}}{w\mid v}%
}C_{w}(E/L^{H}).$
\end{theorem}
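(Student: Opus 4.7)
The plan is to apply the Dokchitser brothers' machinery of regulator constants to the specific Brauer relation $\Theta : \{1\} - 2D_{2} - C_{p} + 2D_{2p}$. The first step is to verify that $X_{p}(E/L)$ is a self-dual $\mathbb{Q}_{p}[G]$-representation, which follows from the Weil pairing on $V_{p}(E)$, the Néron--Tate height pairing on the free part $E(L)\otimes\mathbb{Q}_{p}$, and the Cassels--Tate pairing on $\Sha$ (the hypothesis $p$ odd makes the contribution of its degenerate part a square). Given self-duality, the definition of $T_{\Theta,p}$ reduces the statement to the single congruence
$$\mathrm{ord}_{p}\,C_{\Theta}(X_{p}(E/L)) \equiv \mathrm{ord}_{p}(C) \pmod{2}.$$

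To bridge this, I would consider the $\Theta$-combination of the conjectural BSD expressions for $E$ over each intermediate field $L^{H}$ with $H\in\{\{1\}, D_{2}, C_{p}, G\}$. The inductivity of Artin $L$-functions makes the analytic side cancel identically in the $\Theta$-combination, so the predicted BSD quotients $\prod_{H} BSD(E/L^{H})^{n_{H}}$ yield an identity that is valid unconditionally modulo squares in $\mathbb{Q}_{p}^{*}$. I would then match terms: the regulator piece becomes $C_{\Theta}(E(L)\otimes\mathbb{Q}_{p})$, which modulo squares equals $C_{\Theta}(X_{p}(E/L))$ via the exact sequence defining the Selmer group and the self-duality of the $\Sha$-contribution; the orders of torsion appear squared; and the period, minimal-discriminant, and Tamagawa contributions combine into precisely $C$ through the definition $C_{w}(E/L^{H}) = c_{w}(E/L^{H})\omega(H)$ and the explicit formula for $\omega(H)$ when $l_{v}>3$.

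The main obstacle will be the regulator comparison, namely showing that replacing $E(L)\otimes\mathbb{Q}_{p}$ by the full Selmer representation $X_{p}(E/L)$ changes $C_{\Theta}$ only by a square in $\mathbb{Q}_{p}^{*}/\mathbb{Q}_{p}^{*2}$. This hinges on a careful choice of $G$-invariant pairings that are compatible with the exact sequence
$$0 \to E(L)\otimes\mathbb{Q}_{p}/\mathbb{Z}_{p} \to S(E/L,p^{\infty}) \to \Sha_{E/L}[p^{\infty}] \to 0,$$
combined with the alternating nature of the Cassels--Tate pairing on $\Sha$, which is where the hypothesis $p>2$ really bites. Once this identification is in place, the surviving verifications are essentially bookkeeping, but keeping track of the archimedean periods, the minimal-model ratios $\omega(H)$ at places of bad reduction above $2$ and $3$, and the behaviour of Tamagawa factors under the four subgroups of $\Theta$, requires the local formulae recalled at the end of the section to be handled simultaneously for all $H$.
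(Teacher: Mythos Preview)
The paper does not prove this theorem; it is quoted as Theorem~1.14 of \cite{Dok2} and invoked as a black box, so there is no proof in the present paper to compare your attempt against.

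Your sketch is a plausible outline of the Dokchitser brothers' actual argument in \cite{Dok2} (self-duality of $X_{p}(E/L)$, established separately in \cite{Dok1}; then the $\Theta$-combination of BSD-type quantities, which collapses the $L$-function side via the Brauer relation and leaves the regulator-constant versus Tamagawa/period comparison). However, it remains only a sketch: the substantive content of \cite{Dok2} is precisely making the regulator comparison and the period/differential bookkeeping rigorous, and your paragraph on ``a careful choice of $G$-invariant pairings\dots'' names the difficulty without resolving it. Note also that the formula for $\omega(H)$ ``when $l_{v}>3$'' is a convenience recorded in this paper for later local computations, not a hypothesis in the Dokchitser theorem; the result in \cite{Dok2} holds without that restriction. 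If a proof is genuinely required here rather than a citation, you would need to reproduce or reference the full argument of \cite{Dok2}, not just its skeleton.
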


Now, since $1\oplus \eta \oplus \tau \in T_{\Theta ,p}$ (see \cite{Dok2},
example 2.53 p.24), we only need to prove that :%
\begin{equation*}
\frac{W(E/K,\tau )}{W(E/K,1\oplus \eta )}=(-1)^{ord_{p}C}. \leqno (1)
\end{equation*}

Furthermore, since we are only interested in the parity of $ord_{p}(C),$ we
do not have to determine $C_{v}(D_{2})$ and $C_{v}(G),$ because these terms
only bring an even contribution (since they appear with an even exponent).

Both sides of $(1)$ are of local nature.

As $W(E/K,\tau )=\tprod\limits_{v}W(E/K_{v},\tau _{v})$, where $\sigma
_{v}:=res_{Gal(L_{z}/K_{v})}\sigma ,$ all we need to do is to prove the
following local equality:%
\begin{equation*}
\frac{W(E/K_{v},\tau _{v})}{W(E/K_{v},(1\oplus \eta )_{v})}%
=(-1)^{ord_{p}\left( C_{v}\right) },\leqno(2)
\end{equation*}%
for each finite place $v$ of $K$ ($v\mid \infty $ do not contribute, since $%
p\neq 2$).

\bigskip

Denote by $G_{v}:=Gal(L_{z}/K_{v})$ the decomposition group of $v$. The
proof of Theorem \ref{theo} split in several cases:\medskip

\begin{tabular}{ll}
${\bullet }$ $G_{v}=\{1\}$ (there are $2p$ places above $v$ in $L$) & see
section \ref{cases 1 et C_p} \\ 
${\bullet }$ $G_{v}=D_{2}$ (there are $p$ places above $v$ in $L$) & see
section \ref{case D_2} \\ 
${\bullet }$ $G_{v}=C_{p}$ (there are $2$ places above $v$ in $L$) & see
section \ref{cases 1 et C_p} \\ 
${\bullet }$ $G_{v}=D_{2p}$ (there is a unique place above $v$ in $L$) & see
section \ref{case D_2p}%
\end{tabular}

\bigskip

We first recall a few facts about the local Tamagawa factors of elliptic
curves.

\subsubsection{Local Tamagawa factors of elliptic curves\label{tamagawa}}

The assumptions and notation from above are in force.

The local Tamagawa factor at $v,$ $c(E/K_{v})=\#\left(
E(K_{v})/E^{0}(K_{v})\right) ,$\newline
$\left( \text{where }E^{0}(K_{v})=\left\{ \text{Points of non-singular
reduction}\right\} \right) $ is determined by Tate's algorithm (see \cite%
{Silv2} IV \S 9):\medskip

$c(E/K_{v})=%
\begin{cases}
1 & 
\begin{array}{c}
\text{if }E\text{ has good reduction at }v%
\end{array}
\\ 
1,\text{ }2,\text{ }3\text{ or }4 & 
\begin{array}{c}
\text{if }E\text{ has additive reduction at }v%
\end{array}
\\ 
n & 
\begin{array}{l}
\text{if }E\text{ has split multiplicative reduction} \\ 
\text{of type }I_{n}\text{ at }v%
\end{array}
\\ 
1\text{ or }2 & 
\begin{array}{l}
\text{if }E\text{ has non-split multiplicative reduction} \\ 
\text{of type }I_{n}\text{ at }v%
\end{array}%
\end{cases}%
$

\medskip

If $E$ acquires semi-stable reduction over $L_{z},$ then:

\begin{enumerate}
\item If $E$ has split multiplicative reduction of type $I_{n}$ over $K_{v},$
then:\newline
$c(E/\left( L^{H}\right) _{w})=n.e_{H}.$

\item If $E$ has non-split multiplicative reduction of type $I_{n}$ over $%
K_{v},$ then:\newline
$c(E/\left( L^{H}\right) _{w})=%
\begin{cases}
n.e_{H} & 
\begin{tabular}{l}
if $E$ has split multiplicative reduction \\ 
$\text{over }\left( L^{H}\right) _{w}$%
\end{tabular}
\\ 
1\text{ or }2 & \text{otherwise.}%
\end{cases}%
$

\item If $E$ has potentially good reduction, then $c(E/\left( L^{H}\right)
_{w})=1,2,3$ or $4.$

\item If $E$ has additive and potentially multiplicative reduction then:%
\newline
$c(E/\left( L^{H}\right) _{w})=%
\begin{cases}
n.e_{H} & 
\begin{array}{l}
\text{if }E\text{ has split multiplicative reduction} \\ 
\text{of type }I_{n}\text{ over }\left( L^{H}\right) _{w}\text{ and }%
l_{v}\neq 2\text{.}%
\end{array}
\\ 
1,2,3\text{ or }4 & 
\begin{array}{c}
\text{otherwise.}%
\end{array}%
\end{cases}%
$
\end{enumerate}

\bigskip

The following few remarks will be used in the subsequent computations.

\begin{remark}
\label{rem1}If $w_{1}$ and $w_{2}$ are two places of $L$ above the same $v,$
then:\newline
$c_{w_{1}}(E/L)=c_{w_{2}}(E/L).$\newline
In particular:%
\begin{equation*}
\begin{cases}
C_{v}(\{1\})=C_{w}(E/L)^{r} \\ 
C_{v}(C_{p})=C_{w^{\prime }}(E/L^{C_{p}})^{r^{\prime }}\text{,}%
\end{cases}%
\end{equation*}%
where $r=$the number of places $w$ of $L$ such that $w\mid v$ and $r^{\prime
}=$the number of places $w^{\prime }$ of $L^{C_{p}}$ such that $w^{\prime
}\mid v$.
\end{remark}

\begin{remark}
\label{rem2}If $E/K$ has potentially good reduction at $v,$ then:\newline
$\forall w$ (resp. $w^{\prime }$) place of $L$ (of $L^{C_{p}}$), $c_{w}(E/L)$
($c_{w^{\prime }}(E/L^{C_{p}})$) $\in \left\{ 1,..,4\right\} $,\newline
and therefore $ord_{p}\left( c_{v}\right) =0$ and $(-1)^{ord_{p}\left(
C_{v}\right) }=(-1)^{ord_{p}\left( \frac{\omega (\left\{ 1\right\} )}{\omega
(C_{p})}\right) }.$
\end{remark}

\begin{remark}
\label{rem3}If the reduction of $E/K$ at $v$ is semi-stable, then $\forall H$
subgroup of $D_{2p},$ $\delta _{H}=\delta .e_{H}$ and therefore $\omega
(H)=1 $ and $(-1)^{ord_{p}\left( C_{v}\right) }=(-1)^{ord_{p}\left(
c_{v}\right) }. $
\end{remark}

\begin{remark}
\label{rem4}If $v\nmid p$ (i.e. $p\neq l_{v},$ $p$ is fixed, $l_{v}$ is
variable), then $ord_{p}\left( \omega (H)\right) =0$ and $%
(-1)^{ord_{p}\left( C_{v}\right) }=(-1)^{ord_{p}\left( c_{v}\right) }$.
\end{remark}

\begin{remark}
\label{rem5}By the previous two remarks, if $E/K$ has good reduction at $v,$
then: $(-1)^{ord_{p}\left( C_{v}\right) }=1.$\newline
As $\frac{W(E/K_{v},\tau _{v})}{W(E/K_{v},(1\oplus \eta )_{v})}=\frac{\det
\tau _{v}(-1)}{\det (1\oplus \eta )_{v}(-1)}=1$ in the case of good
reduction,$\smallskip $\newline
we have the desired equality $(2)$ in the case of good reduction at $v.$
\end{remark}

\begin{remark}
\label{rem6}From \ref{rem2}\ and \ref{rem4} we deduce that the only case
that needs the calculation of both $\omega (H)$ and $c_{w}(E/L^{H})$ is the
case of additive potentially multiplicative reduction at $v\mid p.$
\end{remark}

\medskip

\subsubsection{The cases $G_{v}=\{1\}$ and $G_{v}=C_{p}\label{cases 1 et C_p}
$}

In these cases, $C_{v}(\{1\})$ and $C_{v}(C_{p})$ are squares, so $%
ord_{p}\left( C_{v}\right) \equiv 0\ \left( \func{mod}2\right) .$\newline
$\bullet $ If $G_{v}=\{1\}$, $res_{Gal(L_{z}/K_{v})}\tau =1\oplus 1=(1\oplus
\eta )_{v}$, hence $\frac{W(E/K_{v},\tau _{v})}{W(E/K_{v},(1\oplus \eta
)_{v})}=1.$\newline
$\bullet $ If $G_{v}=C_{p}$, $(1\oplus \eta )_{v}=1\oplus 1$ and $\tau
_{v}=\chi \oplus \chi ^{\ast }$, so%
\begin{equation*}
W(E/K_{v},\tau _{v})=1=W(E/K_{v},(1\oplus \eta )_{v})\text{ (see \cite{Dok2}
lemma A.1 p.47)}.
\end{equation*}%
As a result, in both cases we have: $\frac{W(E/K_{v},\tau _{v})}{%
W(E/K_{v},(1\oplus \eta )_{v})}=1=(-1)^{ord_{p}\left( C_{v}\right) }.$

\subsubsection{The case $G_{v}=D_{2}\label{case D_2}$}

We have $\tau _{v}=(1\oplus \eta )_{v}$, so $\frac{W(E/K_{v},\tau _{v})}{%
W(E/K_{v},(1\oplus \eta )_{v})}=1.$

On the other hand, in this case,\newline
$\forall w^{\prime }\mid v$ place of $L^{C_{p}}$ and $\forall w\mid
w^{\prime }$ place of $L$, 
\begin{tabular}{c}
$L_{w}$ \\ 
$\shortparallel $ \\ 
$\left( L^{C_{p}}\right) _{w^{\prime }}$ \\ 
$\hspace{0.08in}\mid {2}$ \\ 
$K_{v}$%
\end{tabular}

In particular, $C_{v}(\{1\})=C_{v}(C_{p})^{p},$ therefore $%
C_{v}=C_{v}(C_{p})^{p-1}$ and\newline
$ord_{p}\left( C_{v}\right) =0.$

Finally, we get: $\frac{W(E/K_{v},\tau _{v})}{W(E/K_{v},(1\oplus \eta )_{v})}%
=1=(-1)^{ord_{p}\left( C_{v}\right) }.$

\subsubsection{The case $G_{v}=D_{2p}\label{case D_2p}$}

Denote by $w$ (resp $z$) the unique place of $L^{C_{p}}$ (resp $L$) above $v$%
.\newline
In this case, there are two possibilities for the inertia group of $G_{v},$ $%
I_{v}=C_{p}$ or $D_{2p}$ (because $I_{v}$ is a normal subgroup of $%
G_{v}=D_{2p}$ and $G_{v}/I_{v}$ is cyclic).\newline
Furthermore, if $l_{v}\neq p$ then $I_{v}=C_{p}:$

- For $l_{v}\neq 2$ because the inertia group of a tamely ramified extension
is cyclic.

- For $l_{v}=2$ because the case $I_{v}=D_{2p}$, $I_{v}^{wild}=D_{2}$ (the
wild inertia group) is impossible since $I_{v}^{wild}$ is normal in $I_{v}$.

\paragraph{2.3.4.1\hspace{0.2cm}Computation of $(-1)^{ord_{p}\left(
C_{v}\right) }\label{computation ordC_v}$}

\begin{enumerate}
\item \label{cas1 ordC_v}If $E/K_{v}$ has potentially multiplicative
reduction:

\begin{enumerate}
\item If $E/K_{v}$ acquires split multiplicative reduction of type $I_{n}$
over $L_{z}$ (and therefore over $\left( L^{C_{p}}\right) _{w}$), then:%
\newline
$C_{v}(\{1\})=c_{w}(E/L_{z})%
\begin{tabular}[t]{l}
$=e_{L_{z}/\left( L^{C_{p}}\right) _{w}}\times c_{w^{\prime }}(E/\left(
L^{C_{p}}\right) _{w})$ \\ 
$=\dfrac{e_{\{1\}}}{e_{C_{p}}}\times c_{v}(E/K)C_{v}(C_{p})$ \\ 
$=\dfrac{e_{\{1\}}}{e_{C_{p}}}\times C_{v}(C_{p})$%
\end{tabular}%
$\newline
but $%
\begin{cases}
\text{if }I_{v}=C_{p}\text{ then }e_{\{1\}}=p\text{ and }e_{C_{p}}=1 \\ 
\text{if }I_{v}=D_{2p}\text{ then }e_{\{1\}}=2p\text{ and }e_{C_{p}}=2%
\end{cases}%
$\newline
In both cases we get: $C_{v}=p$ and $(-1)^{ord_{p}\left( C_{v}\right) }=-1.$

\item If $E/K_{v}$ does not acquire split multiplicative reduction of type $%
I_{n}$ over $L_{z}$ (and therefore nor over $\left( L^{C_{p}}\right) _{w}$),
then:\newline
$c_{v}(\{1\})$, $c_{v}(C_{p})$ $\in \left\{ 1,2,3,4\right\} $ and $%
ord_{p}\left( \dfrac{\omega \left( \left\{ 1\right\} \right) }{\omega \left(
C_{p}\right) }\right) \equiv 0$ $\left( \func{mod}2\right) .\smallskip $%
\newline
The second claim is a consequence of Remark \ref{rem4} in the case $%
l_{v}\neq p$.\newline
In the case $l_{v}=p,$ we have to distinguish two cases:

\begin{enumerate}
\item If $E/K_{v}$ acquires non-split multiplicative reduction of type $%
I_{n} $ over $L_{z}$ (and therefore over $\left( L^{C_{p}}\right) _{w}$),
then $\delta _{\left\{ 1\right\} }=\delta _{C_{p}}.$\newline
Furthermore, $f_{C_{p}}=f_{\left\{ 1\right\} }=1$ or $2$ and $\dfrac{\omega
\left( \left\{ 1\right\} \right) }{\omega \left( C_{p}\right) }=q^{\delta
f(e_{\left\{ 1\right\} }-e_{C_{p}})},$ so $ord_{p}\left( \dfrac{\omega
\left( \left\{ 1\right\} \right) }{\omega \left( C_{p}\right) }\right)
\equiv 0$ $\left( \func{mod}2\right) $ (because $p-1\mid \left( e_{\left\{
1\right\} }-e_{C_{p}}\right) $).

\item If $E/K_{v},E/\left( L^{C_{p}}\right) _{w}$ and $E/L_{z}$ have
additive reduction (of type $I_{n}^{\ast }$):\newline
$\bullet $ If $I_{v}=C_{p}$, then $f_{C_{p}}=f_{\left\{ 1\right\} }=2$ and
the result follows.\newline
$\bullet $ if $I_{v}=D_{2p}$, since $p\geq 5,$ $E$ becomes of type $%
I_{2n}^{\ast }$ over $\left( L^{C_{p}}\right) _{w}$ and $I_{2pn}^{\ast }$
over $L_{z}$ and we get: $ord_{p}\left( \omega \left( \left\{ 1\right\}
\right) \right) =ord_{p}\left( \omega \left( C_{p}\right) \right) \equiv 0$ $%
\left( \func{mod}2\right) .$
\end{enumerate}
\end{enumerate}

\textit{To sum up, in the case of potentially multiplicative reduction:}%
\newline
$(-1)^{ord_{p}\left( C_{v}\right) }=%
\begin{cases}
-1 & \text{if }E/(L^{C_{p}})\text{ has split multiplicative reduction} \\ 
1 & \text{otherwise.}%
\end{cases}%
$

\item If $E/K_{v}$ has potentially good reduction, then:

\begin{enumerate}
\item If $I_{v}=C_{p}$ (i.e. $e_{\{1\}}=p$ and $e_{C_{p}}=1):$\newline
We get: $f_{\{1\}}=f_{C_{p}}=2$ so $ord_{p}(\omega (C_{p}))\equiv
ord_{p}(\omega (\{1\}))\equiv 0$ $\left( \func{mod}2\right) \smallskip $%
\newline
and therefore $(-1)^{ord_{p}\left( C_{v}\right) }=1$ (see Remark \ref{rem2}).

\item If $I_{v}=D_{2p}$ (i.e. $e_{\{1\}}=2p$, $e_{C_{p}}=2$ and $l_{v}=p):$%
\newline
We get: $\frac{C_{v}(\{1\})}{C_{v}(C_{p})}=\frac{\omega (\{1\})}{\omega
(C_{p})}=q_{v}^{\left\lfloor \frac{\delta .e_{\left\{ 1\right\} }}{12}%
\right\rfloor -\left\lfloor \frac{\delta .e_{C_{p}}}{12}\right\rfloor
}=q_{v}^{\left\lfloor \frac{\delta .2p}{12}\right\rfloor -\left\lfloor \frac{%
\delta .2}{12}\right\rfloor }.$

\begin{enumerate}
\item If $q_{v}$ is an even power of $p,$ then\newline
$(-1)^{ord_{p}\left( C_{v}\right) }=(-1)^{ord_{p}\left( \frac{\omega (\{1\})%
}{\omega (C_{p})}\right) }=1.$

\item \label{tableau}If $q_{v}$ is an odd power of $p:$\newline
A computation of $\left\lfloor \frac{\delta .2p}{12}\right\rfloor $ and $%
\left\lfloor \frac{\delta .2}{12}\right\rfloor $ depending on $p$ modulo $12$
gives the following table:\smallskip \newline
Table of values of $(-1)^{ord_{p}\left( C_{v}\right) }$ depending on the
Kodaira symbol of the curve (and the value of $\mathfrak{e}=\frac{12}{%
\limfunc{pgcd}(\delta ,12)}$) and $p$ $\func{mod}12$:%
\begin{equation*}
\begin{tabular}{|l|c|c|c|c|}
\hline
$p$ $\func{mod}12$ & $1$ & $5$ & $7$ & $11$ \\ \hline
$II,II^{\ast }$ ($\mathfrak{e}=6$) & 1 & -1 & 1 & -1 \\ \hline
$III,III^{\ast }$ ($\mathfrak{e}=4$) & 1 & 1 & -1 & -1 \\ \hline
$IV,IV^{\ast }$ ($\mathfrak{e}=3$) & 1 & -1 & 1 & -1 \\ \hline
$I_{o}^{\ast }$ ($\mathfrak{e}=2$) & 1 & 1 & 1 & 1 \\ \hline
\end{tabular}%
\end{equation*}%
In relation to the above table it may be useful to recall the following
fact: if the residue characteristic of $K_{v}$ is $>3,$ then we have the
following correspondence between $\mathfrak{e}=\frac{12}{\limfunc{pgcd}%
(\delta ,12)},$ the valuation of the minimal discriminant $\delta $\ and the
Kodaira symbols:%
\begin{equation*}
\begin{tabular}[t]{lll}
$\mathfrak{e}=1$ & $\Leftrightarrow \delta =0$ & $\Leftrightarrow E$ is of
type $I_{0}$ \\ 
$\mathfrak{e}=2$ & $\Leftrightarrow \delta =6$ & $\Leftrightarrow E$ is of
type $I_{0}^{\ast }$ \\ 
$\mathfrak{e}=3$ & $\Leftrightarrow \delta =4$ or $8$ & $\Leftrightarrow E$
is of type $IV$ or $IV^{\ast }$ \\ 
$\mathfrak{e}=4$ & $\Leftrightarrow \delta =3$ or $9$ & $\Leftrightarrow E$
is of type $III$ or $III^{\ast }$ \\ 
$\mathfrak{e}=6$ & $\Leftrightarrow \delta =2$ or $10$ & $\Leftrightarrow E$
is of type $II$ or $II^{\ast }$.%
\end{tabular}%
\end{equation*}
\end{enumerate}
\end{enumerate}
\end{enumerate}

\paragraph{2.3.4.2\hspace{0.2cm}Computation of $\frac{W(E/K_{v},\protect\tau %
_{v})}{W(E/K_{v},\left( 1\oplus \protect\eta \right) _{v})}$}

\begin{enumerate}
\item \label{Cas1}The case of potentially multiplicative reduction:\newline
We have an explicit formula of Rohrlich (see \cite{Roh2} Th.2 (ii) p.329):%
\begin{equation*}
W(E/K_{v},\sigma )=\det \sigma (-1)\chi (-1)^{\dim \sigma
}(-1)^{\left\langle \chi ,\sigma \right\rangle }\text{,}
\end{equation*}%
where $\chi $ is the character of $K_{v}^{\ast }$ associated to the
extension $K_{v}(\sqrt{-c_{6}})$ of $K_{v}$ ($c_{6}$ is the classical
factor, see \cite{Silv1} p.46).\newline
Since $\dim \tau _{v}=\dim 1\oplus \eta =2$, $\det (\tau _{v})=\det (1\oplus
\eta )$ and $\left\langle \chi ,\tau _{v}\right\rangle =0$, we get:%
\begin{equation*}
\frac{W(E/K_{v},\tau _{v})}{W(E/K_{v},\left( 1\oplus \eta \right) _{v})}=%
\frac{(-1)^{\left\langle \chi ,\tau _{v}\right\rangle }}{(-1)^{\left\langle
\chi ,\left( 1\oplus \eta \right) _{v}\right\rangle }}=\frac{1}{%
(-1)^{\left\langle \chi ,\left( 1\oplus \eta \right) _{v}\right\rangle }}%
=(-1)^{\left\langle \chi ,\left( 1\oplus \eta \right) _{v}\right\rangle }%
\text{.}
\end{equation*}%
\medskip

\begin{enumerate}
\item If the reduction of $E/K_{v}$ is split multiplicative (i.e. $\chi =1$):%
\newline
Then $(-1)^{\left\langle \chi ,\left( 1\oplus \eta \right) _{v}\right\rangle
}=-1$.

\item If the reduction of $E/K_{v}$ is non-split multiplicative (i.e. $\chi $
is an unramified quadratic character):

\begin{enumerate}
\item If $E$ acquires split multiplicative reduction over $L_{z}$ (and
therefore over $\left( L^{C_{p}}\right) _{w}$), then $\eta _{v}=\chi $,
hence $(-1)^{\left\langle \chi ,\left( 1\oplus \eta \right)
_{v}\right\rangle }=-1.$

\item If $E$ acquires non-split multiplicative reduction over $L_{z}$ (and
therefore over $\left( L^{C_{p}}\right) _{w}$), then $\eta _{v}\neq \chi $,
hence $(-1)^{\left\langle \chi ,\left( 1\oplus \eta \right)
_{v}\right\rangle }=1.$
\end{enumerate}

\item If the reduction of $E/K_{v}$ is additive (i.e. $\chi $ is a ramified
quadratic character)

\begin{enumerate}
\item If $E$ acquires split multiplicative reduction over $L_{z}$ (and
therefore over $\left( L^{C_{p}}\right) _{w}$), then $\eta _{v}=\chi $,
hence $(-1)^{\left\langle \chi ,\left( 1\oplus \eta \right)
_{v}\right\rangle }=-1.$

\item If $E$ acquires non-split multiplicative reduction over $L_{z}$ (and
therefore over $\left( L^{C_{p}}\right) _{w}$), then $\eta _{v}\neq \chi $,
hence $(-1)^{\left\langle \chi ,\left( 1\oplus \eta \right)
_{v}\right\rangle }=1.$
\end{enumerate}
\end{enumerate}

\textit{To sum up, in the case of potentially multiplicative reduction:}%
\newline
$\frac{W(E/K_{v},\tau _{v})}{W(E/K_{v},\left( 1\oplus \eta \right) _{v})}$%
\begin{tabular}[t]{l}
$=%
\begin{cases}
-1 & \text{if }E/(L^{C_{p}})\text{\ has split multiplicative reduction} \\ 
1 & \text{otherwise.}%
\end{cases}%
$ \\ 
$=(-1)^{ord_{p}\left( C_{v}\right) }$, by \ref{computation ordC_v}.1.\ref%
{cas1 ordC_v}%
\end{tabular}

\item The case of potentially good reduction:\newline
Here we have to distinguish the cases $l_{v}=p$ and $l_{v}\neq p.$

\begin{enumerate}
\item The case $l_{v}=p.$\newline
We have again an explicit formula of Rohrlich, since $p\geq 5$ (see \cite%
{Roh2}, Th.2 (iii) p.329):\newline
We use the following notation:{}\newline
$\bullet $ $q=p^{r}$ the cardinality of the residue field residue degree of $%
K_{v}$\newline
$\bullet $ $\mathfrak{e}=\frac{12}{\limfunc{pgcd}(\delta ,12)}$\newline
$\bullet $ $\epsilon =%
\begin{cases}
1 & \text{if }r\text{ is even or }\mathfrak{e}=1 \\ 
\left( \frac{-1}{p}\right) & \text{if }r\text{ is odd and }\mathfrak{e}=2%
\text{ or }6 \\ 
\left( \frac{-3}{p}\right) & \text{if }r\text{ is odd and }\mathfrak{e}=3 \\ 
\left( \frac{-2}{p}\right) & \text{if }r\text{ is odd and }\mathfrak{e}=4%
\text{.}%
\end{cases}%
$\newline
Then $\forall \sigma $ a self-dual representation of $Gal(\overline{K}%
_{v}/K_{v})$ with finite image:%
\begin{equation*}
W(E/K_{v},\sigma )=%
\begin{cases}
\alpha (\sigma ,\epsilon ) & 
\begin{tabular}{l}
if $q\equiv 1[\mathfrak{e}]$%
\end{tabular}
\\ 
\alpha (\sigma ,\epsilon )(-1)^{\left\langle 1+\eta _{nr}+\hat{\sigma}%
_{e},\sigma \right\rangle } & 
\begin{array}{l}
\text{if }q\equiv -1[\mathfrak{e}] \\ 
\text{and }\mathfrak{e}=3,4,6,%
\end{array}%
\end{cases}%
\end{equation*}%
where $\eta _{nr}$ is the unramified quadratic character, $\hat{\sigma}_{e}$
is an irreductible representation of degree 2 of $D_{2\mathfrak{e}}$ and $%
\alpha (\sigma ,\epsilon ):=(\det \sigma )(-1)\epsilon ^{\dim \sigma }$.%
\newline
Since $\dim \tau _{v}=\dim \left( 1\oplus \eta \right) _{v}=2$ and $\det
\tau _{v}=\det \left( 1\oplus \eta \right) _{v}$,\newline
$\alpha (\left( 1\oplus \eta \right) _{v},\epsilon )=\alpha (\tau
_{v},\epsilon )$ and we get:\newline
\begin{tabular}{ll}
$\frac{W(E/K_{v},\tau _{v})}{W(E/K_{v},\left( 1\oplus \eta \right) _{v})}$ & 
$=%
\begin{cases}
1 & 
\begin{tabular}{l}
if $q\equiv 1[\mathfrak{e}]$%
\end{tabular}
\\ 
(-1)^{\left\langle 1+\eta _{nr}+\hat{\sigma}_{e},1+\eta _{v}+\tau
_{v}\right\rangle } & 
\begin{array}{l}
\text{if }q\equiv -1[\mathfrak{e}] \\ 
\text{and }\mathfrak{e}=3,4,6,%
\end{array}%
\end{cases}%
\smallskip $ \\ 
& $=%
\begin{cases}
1 & 
\begin{tabular}{l}
if $q\equiv 1[\mathfrak{e}]$%
\end{tabular}
\\ 
(-1)^{\left\langle 1+\eta _{nr},1+\eta _{v}\right\rangle } & 
\begin{array}{l}
\text{if }q\equiv -1[\mathfrak{e}] \\ 
\text{and }\mathfrak{e}=3,4,6,%
\end{array}%
\end{cases}%
$ \\ 
& ($\left\langle \hat{\sigma}_{e},\tau _{v}\right\rangle =0$ since $%
\mathfrak{e}=3,4,6$ and $p\geq 5$).$\bigskip $%
\end{tabular}

\begin{enumerate}
\item If $r$ is even, then $q\equiv 1[\mathfrak{e}]$ $\forall \mathfrak{e}%
\in \left\{ 2,3,4,6\right\} $ and therefore%
\begin{equation*}
\frac{W(E/K_{v},\tau _{v})}{W(E/K_{v},\left( 1\oplus \eta \right) _{v})}%
=1=(-1)^{ord_{p}\left( C_{v}\right) },
\end{equation*}%
by 2.b.i (in section 2.3.4.1).

\item If $r$ is odd, then $q\equiv 1[\mathfrak{e}]\Longleftrightarrow
p\equiv 1[\mathfrak{e}]$ and:%
\begin{equation*}
\frac{W(E/K_{v},\tau _{v})}{W(E/K_{v},\left( 1\oplus \eta \right) _{v})}=%
\begin{cases}
1 & 
\begin{tabular}{l}
if $q\equiv 1[\mathfrak{e}]$%
\end{tabular}
\\ 
(-1)^{\left\langle 1+\eta _{nr},1+\eta _{v}\right\rangle } & 
\begin{array}{l}
\text{if }q\equiv -1[\mathfrak{e}]\text{ and} \\ 
\mathfrak{e}=3,4,6\text{.}%
\end{array}%
\end{cases}%
\end{equation*}%
\bigskip

\begin{enumerate}
\item If $I_{v}=C_{p},$ then $\eta _{nr}=\eta _{v}$ and $\frac{%
W(E/K_{v},\tau _{v})}{W(E/K_{v},\left( 1\oplus \eta \right) _{v})}=1$.

\item If $I_{v}=D_{2p},$ then $\eta _{nr}\neq \eta _{v}$ and:%
\begin{equation*}
\frac{W(E/K_{v},\tau _{v})}{W(E/K_{v},\left( 1\oplus \eta \right) _{v})}=%
\begin{cases}
1 & \text{if }q\equiv 1[\mathfrak{e}] \\ 
-1 & \text{if }q\equiv -1[\mathfrak{e}]\text{ and }\mathfrak{e}=3,4,6\text{.}%
\end{cases}%
\end{equation*}
\end{enumerate}

In both cases, we obtain for the values of $\frac{W(E/K_{v},\tau _{v})}{%
W(E/K_{v},\left( 1\oplus \eta \right) _{v})}$ exactly the same table as for
the values of $(-1)^{ord_{p}\left( C_{v}\right) },$ depending on $p$ modulo $%
12$:\medskip \newline
Table of values of $\frac{W(E/K_{v},\tau _{v})}{W(E/K_{v},\left( 1\oplus
\eta \right) _{v})}$ depending on the Kodaira symbol of the curve (and the
value of $\mathfrak{e}=\frac{12}{\limfunc{pgcd}(\delta ,12)}$) and $p$ $%
\func{mod}12$:%
\begin{equation*}
\begin{tabular}{|l|l|l|l|l|}
\hline
$p$ $\func{mod}12$ & $1$ & $5$ & $7$ & $11$ \\ \hline
$II,II^{\ast }$ ($\mathfrak{e}=6$) & \multicolumn{1}{|c|}{1} & 
\multicolumn{1}{|c|}{-1} & \multicolumn{1}{|c|}{1} & \multicolumn{1}{|c|}{-1}
\\ \hline
$III,III^{\ast }$ ($\mathfrak{e}=4$) & \multicolumn{1}{|c|}{1} & 
\multicolumn{1}{|c|}{1} & \multicolumn{1}{|c|}{-1} & \multicolumn{1}{|c|}{-1}
\\ \hline
$IV,IV^{\ast }$ ($\mathfrak{e}=3$) & \multicolumn{1}{|c|}{1} & 
\multicolumn{1}{|c|}{-1} & \multicolumn{1}{|c|}{1} & \multicolumn{1}{|c|}{-1}
\\ \hline
$I_{o}^{\ast }$ ($\mathfrak{e}=2$) & \multicolumn{1}{|c|}{1} & 
\multicolumn{1}{|c|}{1} & \multicolumn{1}{|c|}{1} & \multicolumn{1}{|c|}{1}
\\ \hline
\end{tabular}%
\end{equation*}
\end{enumerate}

\item The case $l_{v}\neq p:$\newline
In this case, the explicit formula of Rohrlich cannot be used, since $l_{v}$
can be $2$ or $3.$\newline
Let $\sigma $ be a representation $\sigma $ $:$ $Gal(\overline{K}%
_{v}/K_{v})\rightarrow GL(V_{\sigma })$ with finite image; let $\sigma
_{E/K_{v}}^{\prime }:WD(\overline{K}_{v}/K_{v})\rightarrow GL(V)$ be the
representation of the Weil-Deligne group associated to the elliptic curve
given by $\left( \sigma _{E/K_{v}},N\right) =\left( \sigma
_{E/K_{v}},0\right) $ (because the reduction is potentially good), therefore
this is simply a representation of the Weil group $W(\bar{K}_{v}/K_{v})$
(because $N=0$) and 
\begin{equation*}
\sigma _{E/K_{v}}^{\prime }\otimes \sigma =\sigma _{E/K_{v}}\otimes \sigma
:W(\overline{K}_{v}/K_{v})\rightarrow GL(W),
\end{equation*}%
where $W=V\otimes V_{\sigma },$ is also a representation of the Weil group.%
\newline
We first recall the link between $\varepsilon $-factors and root numbers:%
\begin{equation*}
W(E/K_{v},\sigma )=\dfrac{\varepsilon (\sigma _{E/K_{v}}\otimes \sigma ,\psi
,dx)}{\left\vert \varepsilon (\sigma _{E/K_{v}}\otimes \sigma ,\psi
,dx)\right\vert }=\varepsilon (\sigma _{E/K_{v}}^{\prime }\otimes \sigma
,\psi ,dx_{\psi }),
\end{equation*}%
where $dx$ is any Haar measure, $\psi $ is any additive character of $K_{v}$
and $dx_{\psi }$ the self-dual Haar measure with respect to $\psi $ on $%
K_{v} $.\newline
Here, we choose an additive character $\psi $ for which the Haar measure $%
dx_{\psi }$ takes values (on open compact subsets of $K_{v}$) in $\mathbb{Z}%
_{p}[\zeta _{p}],$ where $\zeta _{p}$ is a primitive $p$-th root of unity.
For example, if the conductor of $\psi $ is trivial, then the values of $%
dx_{\psi }$ lie in $l_{v}^{\mathbb{Z}}\cup \{0\}\subset \mathbb{Z}_{p}[\zeta
_{p}]$.\newline
In one of his articles (\cite{Del} p.548), Deligne gives a description of
the $\varepsilon $-factors in terms of $\varepsilon _{0}$-factors; in our
settings this gives:%
\begin{equation*}
\varepsilon (\sigma _{E/K_{v}}\otimes \sigma ,\psi ,dx_{\psi })=\varepsilon
_{0}(\sigma _{E/K_{v}}\otimes \sigma ,\psi ,dx_{\psi })\det (-\nu (\phi
)\mid W^{I(v)}),
\end{equation*}%
where $\phi $ is the geometric Frobenius at $v$ and $I(v)=Gal(\bar{K}%
_{v}/K_{v}^{ur}).$\newline
Recall that, since $l_{v}\neq p,$ the inertia group of $D_{2p}$ is $%
I_{v}=C_{p}$.\medskip

\begin{enumerate}
\item If $E$ has additive reduction, denote by $F$ the smallest Galois
extension of $K_{v}^{ur}$ such that $E$ has good reduction over $F$ and set $%
\Phi =Gal(F/K_{v}^{ur})$; then the restiction of $\sigma _{E/K_{v}}$ to $%
I(v) $ factors through $\Phi .$\newline
It is known that:\newline
$\bullet $ For $l_{v}\geq 5,$ $\Phi $ is cyclic of order $\mathfrak{e}=\frac{%
12}{\limfunc{pgcd}(\delta ,12)}$ (dividing 12).\newline
$\bullet $ For $l_{v}=3,$ $\left\vert \Phi \right\vert \in \left\{
2,3,4,6,12\right\} .$\newline
$\bullet $ For $l_{v}=2,$ $\left\vert \Phi \right\vert \in \left\{
2,3,4,6,8,24\right\} .$\newline
For a more precise description of $\Phi $, see, for example, \cite{Bil} or 
\cite{Kraus}.\newline
The representation $\sigma _{E/K}\otimes \sigma $ ($\sigma =\tau _{v}$ or $%
\left( 1\oplus \eta \right) _{v}$) restricted to $I(v)$ factors through a
quotient $H$ of $I(v)$ which admits $\Phi $ and $C_{p}$ as quotients.\newline
We have:%
\begin{equation*}
\left( V\otimes V_{\sigma }\right) ^{I(v)}=\left( V\otimes V_{\sigma
}\right) ^{H}=Hom_{H}(V^{\ast },V_{\sigma })=Hom((V^{\Phi })^{\ast
},V_{\sigma }^{C_{p}})
\end{equation*}%
because $H$ acts on $V$ (resp. on $V_{\sigma }$) through its quotient $\Phi $
(resp. $C_{p}$) and $\left\vert \Phi \right\vert $ is prime to $p$.\newline
Futhermore, $V^{H}=V^{\Phi }=\left\{ 0\right\} $ since $E$ has additive
reduction, hence%
\begin{equation*}
\left( V\otimes V_{\sigma }\right) ^{I(v)}=0,\hspace{0.3cm}\det \left(
-\left( \sigma _{E/K_{v}}^{\prime }\otimes \sigma \right) (\phi )\mid \left(
V\otimes V_{\sigma }\right) ^{I(v)}\right) =1
\end{equation*}%
and%
\begin{equation*}
W(E/K_{v},\sigma )=\varepsilon _{0}(\sigma _{E/K_{v}}\otimes \sigma ,\psi
,dx_{\psi })\qquad (\sigma =\tau _{v},(1\oplus \eta )_{v}).\leqno(3)
\end{equation*}%
\bigskip Deligne also gives congruence results for these $\varepsilon _{0}$ (%
\cite{Del} p.556-557). Since $\chi \equiv 1$ $\func{mod}(1-\zeta _{p})$, we
deduce $I(\chi )\equiv I(1)$ $\func{mod}(1-\zeta _{p})$ and $\sigma
_{E/K_{v}}^{\prime }\otimes \tau _{v}\equiv \sigma _{E/K_{v}}^{\prime
}\otimes \left( 1\oplus \eta \right) _{v}$ $\func{mod}(1-\zeta _{p})$. So
according to Deligne, $\varepsilon _{0}(\sigma _{E/K_{v}}^{\prime }\otimes
\tau _{v},\psi ,dx_{\psi })$ and $\varepsilon _{0}(\sigma _{E/K_{v}}^{\prime
}\otimes \left( 1\oplus \eta \right) _{v},\psi ,dx_{\psi })$ are two
elements of $\{\pm 1\}$ (by (3)), which are congruent modulo $(1-\zeta _{p})$%
, hence the are equal. As a result, 
\begin{equation*}
\frac{W(E/K_{v},\tau _{v})}{W(E/K_{v},\left( 1\oplus \eta \right) _{v})}=1.
\end{equation*}

\item If $E$ has good reduction, then $\sigma _{E/K_{v}}$ is unramified.%
\newline
Then we have:\newline
$\varepsilon (\sigma _{E/K_{v}}\otimes \tau _{v},\psi ,dx)=\varepsilon (\tau
_{v},\psi ,dx)^{\dim \sigma _{E/K_{v}}}\det \sigma _{E/K_{v}}(\Phi ^{m(\tau
_{v},\psi )}),$ where $m(\tau _{v},\psi )\in \mathbb{N}$ depends on
conductors of both $\tau _{v}$ and $\psi $, and the dimension of $\tau _{v}$
(see \cite{Tate1} 3.4.6 p.15), therefore:%
\begin{equation*}
W(E/K_{v},\tau _{v})=W(\sigma _{E/K_{v}}\otimes \tau _{v})=\dfrac{%
\varepsilon (\sigma _{E/K_{v}}\otimes \tau _{v},\psi ,dx)}{\left\vert
\varepsilon (\sigma _{E/K_{v}}\otimes \tau _{v},\psi ,dx)\right\vert }=1%
\text{,}
\end{equation*}%
since $\det \sigma _{E/K_{v}}=1$, $W(\tau _{v})=\dfrac{\varepsilon (\tau
_{v},\psi ,dx)}{\left\vert \varepsilon (\tau _{v},\psi ,dx)\right\vert }=\pm
1$ (because $\det \tau _{v}=1,$ see Proposition p.145 \cite{Roh1}) and $\dim
\sigma _{E/K_{v}}=2.$\newline
Similarly, $W(E/K_{v},\left( 1\oplus \eta \right) _{v})=1,$ so $\frac{%
W(E/K_{v},\tau _{v})}{W(E/K_{v},\left( 1\oplus \eta \right) _{v})}=1.$
\end{enumerate}

In both cases i) and ii) we also have $(-1)^{ord_{p}(C_{v})}=1$ by 2.a. (in
section 2.3.4.1).
\end{enumerate}
\end{enumerate}

\bigskip

To sum up, we have, for each finite prime $v$ of $K$,%
\begin{equation*}
\frac{W(E/K_{v},\tau _{v})}{W(E/K_{v},\left( 1\oplus \eta \right) _{v})}%
=(-1)^{ord_{p}\left( C_{v}\right) }.
\end{equation*}%
This completes the proof of Theorem \ref{theo}.\hspace{5cm}$\blacksquare $

\begin{remark}
This proof can be adjusted to work in the case $Gal(L/K)\simeq D_{2p^{n}},$
the computations are almost the same. The idea to reduce the proof to the
case of a $D_{2p}$-extension, using Galois invariance of Rohrlich, was
suggested to me by Tim Dokchitser.
\end{remark}

\section{Appendix}

The purpose of this appendix is to make a small improvement on Theorem 6.7
of \cite{Dok3}. The interest of this improvement is that Proposition 6.12 of 
\cite{Dok3} (which is the same statement as Theorem \ref{theo2} for $p\equiv
3$ $\func{mod}4)$ will no longer rely on the "truly painful case of additive
reduction" anymore (see \cite{Dok2} p.30). In fact, we use the passage to
the global case to avoid all places of additive reduction, not just those
above $2$ and $3$. Since we have proved the result for $p\geq 5$ (Theorem %
\ref{theo2}) without using any global parity results at all, for us this is
of interest essentially in the case $p=3.$\medskip

\medskip

We start by recalling the definition of an elliptic curve being \textit{%
close to} another one:

\begin{proposition}
\label{Prop courbe proche}Let $\mathcal{E}:$ $%
y^{2}+a_{1}xy+a_{3}y=x^{3}+a_{2}x^{2}+a_{4}x+a_{6}$ be an elliptic curve
over a non archimedean local field $\mathcal{K}$ (with valuation $v$ and
residue characteristic $p$) and $\mathcal{F}/\mathcal{K}$ a finite Galois
extension.\newline
There exists $\varepsilon >0$ such that every elliptic curve $\mathcal{E}%
^{\prime }:$ $y^{2}+a_{1}^{\prime }xy+a_{3}^{\prime }y=x^{3}+a_{2}^{\prime
}x^{2}+a_{4}^{\prime }x+a_{6}^{\prime }$ over $\mathcal{K}$ satisfying:%
\begin{equation*}
\forall i\left\vert a_{i}^{\prime }-a_{i}\right\vert _{v}<\varepsilon ,
\end{equation*}%
the curves $\mathcal{E}$ and $\mathcal{E}^{\prime }$ have the same
conductor, valuation of the minimal discriminant, local Tamagawa factors, $%
C(E/\mathcal{F}^{\prime },\frac{dx}{2y+a_{1}x+a_{3}}),$ root numbers and the
Tate module as a $Gal(\mathcal{\bar{K}}/\mathcal{K)}$-module for $l\neq p$,
in all intermediate fields $\mathcal{F}^{\prime }$ of $\mathcal{F}/\mathcal{K%
}$.\newline
We will say that $\mathcal{E}^{\prime }$ is \textit{close to} $\mathcal{E}/%
\mathcal{K}$.
\end{proposition}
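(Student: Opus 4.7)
The plan is to exploit the fact that every invariant listed---conductor, valuation of the minimal discriminant, Tamagawa factors, root numbers, Tate module, and the quantity $C(\mathcal{E}/\mathcal{F}', dx/(2y+a_1x+a_3))$---is the output of a finite algorithmic process whose decisions depend only on finitely many polynomial expressions in the Weierstrass coefficients. At the heart of the argument lies Tate's algorithm: each step branches on an inequality of the form $v(P(a_1,\ldots,a_6)) \geq k$ for an explicit polynomial $P$. Whenever $v(P(a_i))$ is finite, any sufficiently small perturbation of the $a_i$'s leaves it unchanged; whenever $P(a_i)=0$, the relations forcing a given branch either persist under perturbation or the resulting strict inequalities remain valid. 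Choosing $\varepsilon_0 > 0$ small enough that every comparison encountered in the algorithm behaves identically on $\mathcal{E}'$ as on $\mathcal{E}$, I obtain that $\mathcal{E}'$ shares the same minimal Weierstrass model, Kodaira symbol, conductor exponent, $\delta$, and Tamagawa factor $c(\mathcal{E}/\mathcal{K})$.

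Second, I would run the same argument over each intermediate field $\mathcal{F}'$ of $\mathcal{F}/\mathcal{K}$, using that coefficient proximity with respect to $|\cdot|_v$ translates directly into proximity with respect to $|\cdot|_w$ for any place $w$ of $\mathcal{F}'$ above $v$. Since $\mathcal{F}/\mathcal{K}$ has only finitely many intermediate fields, taking the minimum of the resulting $\varepsilon_{\mathcal{F}'}$ yields a uniform $\varepsilon > 0$ that preserves $\delta_H$, $e_H$, $f_H$, and $c_w(\mathcal{E}/\mathcal{F}')$ simultaneously in all intermediate fields. The invariant differential $dx/(2y+a_1x+a_3)$ depends continuously on the $a_i$, and its ratio to the minimal differential is controlled by preserved valuations of the discriminant; hence the local quantities $C(\mathcal{E}/\mathcal{F}', dx/(2y+a_1x+a_3))$ are preserved as well.

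For the Tate module with $l \neq p$, I would use that $\mathcal{E}[l^n]$ is a finite étale $\mathcal{K}$-scheme cut out by polynomial equations whose coefficients are themselves polynomials in the $a_i$. For any fixed $n$, Hensel's lemma furnishes a $\mathrm{Gal}(\bar{\mathcal{K}}/\mathcal{K})$-equivariant bijection $\mathcal{E}[l^n](\bar{\mathcal{K}}) \to \mathcal{E}'[l^n](\bar{\mathcal{K}})$ once $\varepsilon$ is small compared to the $l$-adic separation of the $l^n$-torsion. The main obstacle is that the Tate module is an \emph{inverse limit}, so one needs a single $\varepsilon$ that works simultaneously for all $n$. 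This is exactly where $l \neq p$ is critical: it ensures the moduli of pairs (elliptic curve, full level-$l^n$ structure) is an étale cover of the moduli of elliptic curves uniformly in $n$, so all level structures transport coherently across a small neighborhood of $\mathcal{E}$, yielding an isomorphism $T_l(\mathcal{E}) \simeq T_l(\mathcal{E}')$ of Galois modules. Once this holds, the associated Weil--Deligne representations coincide (the monodromy operator $N$ is determined by the preserved reduction type), and the root numbers $W(\mathcal{E}/\mathcal{F}')$ agree in every intermediate field.
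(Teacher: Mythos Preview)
The paper does not prove this proposition; it simply cites Proposition~3.3 of \cite{Dok3}. Your sketch is therefore an independent attempt. The parts concerning Tate's algorithm and the finitely many intermediate fields are essentially correct: each branch of the algorithm is decided by the valuation of a polynomial in the $a_i$, and a perturbation of sufficiently high valuation leaves all such valuations---hence the minimal model, $\delta$, the Kodaira type, the Tamagawa number, and the differential comparison---unchanged over every $\mathcal{F}'$.

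There is, however, a genuine gap in your treatment of the Tate module. You correctly flag the difficulty (a single $\varepsilon$ must work for all $n$) but do not resolve it. The assertion that $Y(l^n)\to Y(1)$ is \'etale ``uniformly in $n$'' is true only in the weak sense that each individual cover is \'etale; the degrees grow without bound, and Krasner-type transport of a level-$l^n$ structure requires the perturbation to be smaller than the separation of points in the fibre, which may well shrink as $n\to\infty$. What actually makes one $\varepsilon$ suffice for $l\neq p$ is this: after passing to a fixed finite extension $L/\mathcal{K}$ of semi-stable reduction, the $G_L$-module $T_l$ is determined either by the special fibre of the N\'eron model (good case) or by the Kummer class of the Tate parameter $q$ in $\lim\limits_{\longleftarrow} L^{\times}/(L^{\times})^{l^n}$ (multiplicative case); since $l\neq p$ the group of principal units $1+\mathfrak{m}_L$ is $l$-divisible, so any $q'$ close to $q$ has the same class and yields an isomorphic extension $0\to\mathbb{Z}_l(1)\to T_l\to\mathbb{Z}_l\to 0$. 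The descent from $G_L$ to $G_{\mathcal{K}}$ then involves only a finite quotient, controlled by a single torsion level. Alternatively, one can invoke the equivalence (for $l\neq p$) between $l$-adic representations of $G_{\mathcal{K}}$ and Weil--Deligne representations, the latter being visibly constant under small perturbation of the $a_i$. Without one of these arguments your uniformity step remains unsupported.
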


\begin{proof}
This is Proposition 3.3 of \cite{Dok3}.\medskip
\end{proof}

We now state the minor improvement of Theorem 6.7 of \cite{Dok3}:

\begin{theorem}
Let $\mathcal{K}$ a local non archimedean field of characteristic $0$ and $%
\mathcal{F}/\mathcal{K}$ a finite Galois extension. Let $F/K$ be a Galois
extension of totally real fields and $v_{0}$ a place of $K$ such that:%
\newline
$\bullet $ $v_{0}$ admits a unique place $\bar{v}_{0}$ of $F$ above it%
\newline
$\bullet $ $K_{v_{0}}\simeq \mathcal{K}$ and $F_{\bar{v}_{0}}\simeq \mathcal{%
F}$.\newline
Such an extension exists (see Lemma 3.1 of \cite{Dok3}).\newline
Let $\mathcal{E}/\mathcal{K}$ be an elliptic curve with additive reduction.%
\newline
Then there exists an elliptic curve $E/K$ such that:\newline
$\bullet $ $E$ has semi-stable reduction for all $w\neq v_{0}$\newline
$\bullet $ $j(E)$ is not an integer (i.e. $j(E)\notin \mathcal{O}_{K})$%
\newline
$\bullet $ $E/K_{v_{0}}$ is \textit{close to} $\mathcal{E}/\mathcal{K}$.
\end{theorem}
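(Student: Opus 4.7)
The plan is to adapt the proof of Theorem 6.7 of \cite{Dok3}, inserting a single additional approximation constraint so as to force $j(E)\notin\mathcal{O}_K$: we will require the global curve $E$ to have multiplicative reduction at one carefully chosen auxiliary place. First fix a Weierstrass model $y^2+a_1xy+a_3y=x^3+a_2x^2+a_4x+a_6$ of $\mathcal{E}/\mathcal{K}$ with coefficients $a_i\in\mathcal{K}$. By Proposition \ref{Prop courbe proche} applied to $\mathcal{E}$ and $\mathcal{F}/\mathcal{K}$, there exists $\varepsilon>0$ such that any elliptic curve over $K$ with Weierstrass coefficients $a_i'$ satisfying $|a_i'-a_i|_{v_0}<\varepsilon$ (under the identification $K_{v_0}\simeq\mathcal{K}$) is automatically close to $\mathcal{E}/\mathcal{K}$ in the sense of that proposition; in particular, such a curve has additive reduction at $v_0$.

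Next, I pick an auxiliary finite place $v_1\neq v_0$ of $K$ and select coefficients $b_i\in K_{v_1}$ defining an elliptic curve over $K_{v_1}$ with split multiplicative reduction of type $I_1$ (for instance, in odd residue characteristic, $y^2=x^3+x^2+\pi_{v_1}$ with $\pi_{v_1}$ a uniformizer at $v_1$). A second application of Proposition \ref{Prop courbe proche} at $v_1$ (with the trivial intermediate extension) yields $\varepsilon_1>0$ such that any $E/K$ whose Weierstrass coefficients lie within $\varepsilon_1$ of the $b_i$ at $v_1$ inherits multiplicative reduction at $v_1$. By weak approximation in $K$, I can then find global $a_i'\in K$ simultaneously satisfying $|a_i'-a_i|_{v_0}<\varepsilon$ and $|a_i'-b_i|_{v_1}<\varepsilon_1$ for all $i$.

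It remains to arrange semi-stable reduction at every place outside $\{v_0,v_1\}$ while preserving the two local approximations. This is exactly the step already carried out in the proof of Theorem 6.7 of \cite{Dok3}, with only a cosmetic modification: the set of coefficient $5$-tuples producing additive reduction at a given place $w\notin\{v_0,v_1\}$ is the Zariski-closed subset cut out by the congruences $v_w(c_4)\geq 1$ and $v_w(c_6)\geq 1$, so a standard genericity/covering-by-congruences argument (identical to the one in \cite{Dok3}) perturbs $(a_i')$ within its open neighborhood at $v_0$ and $v_1$ to avoid this condition simultaneously at all remaining places. The resulting elliptic curve $E/K$ then has additive reduction only at $v_0$ (hence is semi-stable elsewhere), multiplicative reduction at $v_1$, and is close to $\mathcal{E}$ at $v_0$; in particular $v_{v_1}(j(E))=-v_{v_1}(\Delta(E))<0$, so $j(E)\notin\mathcal{O}_K$.

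\textbf{Main obstacle.} The only genuine work is the bookkeeping: combining the two local approximations at $v_0$ and $v_1$ with a genericity/semi-stability argument at all remaining places. Both ingredients are already present in \cite{Dok3}, so passing from one prescribed place to two requires only a mechanical modification of their weak-approximation and genericity arguments, with no new conceptual input beyond selecting $v_1$ and adding it to the list of controlled places.
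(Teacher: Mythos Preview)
Your proposal is correct and follows the same strategy as the paper: approximate $\mathcal{E}$ at $v_0$, force multiplicative reduction at one auxiliary place (so that $j(E)$ has negative valuation there), and arrange semi-stability at all remaining places by congruence/CRT arguments. The only difference is presentational: where you defer the last step to \cite{Dok3}, the paper spells it out as an explicit three-stage Chinese remainder construction (first forcing $v(a_1)=0$ at places above $2$, then fixing $c_4$ at the auxiliary place via $a_2,a_3,a_4$, and finally adjusting $a_6$---on which $c_4$ does not depend---to kill additive reduction at the finitely many remaining places where $v(c_4)>0$).
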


\begin{proof}
We first choose an elliptic curve $E/K$ such that $E/K_{v_{0}}$ is \textit{%
close to} $\mathcal{E}/\mathcal{K}$ (this is possible, by Proposition \ref%
{Prop courbe proche}).\newline
Now the goal is to remove all places of additive reduction by changing $E/K$
to an elliptic curve satisfying the three conditions of the theorem.\newline
Let $E:y^{2}+a_{1}xy+a_{3}y=x^{3}+a_{2}x^{2}+a_{4}x+a_{6}$ with $a_{i}\in 
\mathcal{O}_{K}$.\newline
If we want a place not to be of additive reduction we have to impose one of
the two following conditions:\newline
$\bullet $ The valuation $w(\Delta )$ is zero (in this case $w$ is of good
reduction).\newline
$\bullet $ The valuation $w(c_{4})$ is zero (in this case $w$ is of good or
multiplicative reduction depending on $w(\Delta )=0$ or $>0$).\newline
\smallskip Let $v\neq v_{0}$ be a place of $K$ not above $2$.\newline
To get the condition "$j(E)$ is not an integer" it is sufficient to make $v$
a multiplicative place ($v$ is multiplicative $\Leftrightarrow $ $v(j(E))<0$%
).\newline
\textbf{Step 1}:\textbf{\ Make semi-stable all places }$w\neq v_{0}$ \textbf{%
above }$2$\newline
Denote by $v_{2,1},...,v_{2,r}$ these places.\newline
In this case: $\left[ v_{2,i}(a_{1})=0\Rightarrow v_{2,i}(c_{4})=0\text{ (}%
c_{4}=(a_{1}^{2}+4a_{2})^{2}-24a_{1}a_{3}-48a_{4}\text{)}\right] .$\newline
Let $\mathfrak{p}_{0}$ and $\mathfrak{p}_{2,i}$ be the primes ideals
associated to $v_{0}$ and $v_{2,i}.$\newline
By the Chinese remainder theorem, there exists $d_{1}\in \mathcal{O}_{K}$
such that:\newline
$\bullet $ $d_{1}\equiv 0$ mod $\mathfrak{p}_{0}^{n}$ (i.e. $%
v_{0}(d_{1})\geq n).$\newline
$\bullet $ $d_{1}\equiv 1-a_{1}$ mod $\mathfrak{p}_{2,i}$ $\forall i\in
\left\{ 1,..,r\right\} $ (i.e. $v_{2,i}(a_{1}+d_{1})=0$).\newline
$\bullet $ $d_{1}\equiv -a_{1}$ mod $\mathfrak{p}$ ($\mathfrak{p}$
associated to $v\neq v_{0}$).\newline
So, if we let $a_{1}^{\prime }=a_{1}+d_{1}$ for $n$ big enough we get the
curve $y^{2}+a_{1}^{\prime }xy+a_{3}y=x^{3}+a_{2}x^{2}+a_{4}x+a_{6}$ which
is \textit{close to} $\mathcal{E}/\mathcal{K}$, $v_{2,i}(a_{1}^{\prime
})=v_{2,i}(a_{1}+d_{1})=0$ $\forall i\in \left\{ 1,..,r\right\} $ and $%
v(a_{1}^{\prime })>0.$\newline
\textbf{Step 2}: \textbf{Make }$v$\textbf{\ semi-stable}\newline
By the Chinese remainder theorem, there exist $d_{2},d_{3},d_{4}\in \mathcal{%
O}_{K}$ such that:\newline
$\bullet $ 
\begin{tabular}[t]{l}
$d_{2}\equiv 0$ mod $\mathfrak{p}_{0}^{n}$ (i.e. $v_{0}(d_{2})\geq n)$ \\ 
$d_{2}\equiv 1-a_{2}$ mod $\mathfrak{p}$ (so $v(a_{2}+d_{2})=0$).%
\end{tabular}%
\newline
$\bullet $ 
\begin{tabular}[t]{l}
$d_{3}\equiv 0$ mod $\mathfrak{p}_{0}^{n}$ (i.e. $v_{0}(d_{3})\geq n)$ \\ 
$d_{3}\equiv -a_{3}$ mod $\mathfrak{p}$ (so $v(a_{3}+d_{3})>0$).%
\end{tabular}%
\newline
$\bullet $ 
\begin{tabular}[t]{l}
$d_{4}\equiv 0$ mod $\mathfrak{p}_{0}^{n}$ (i.e. $v_{0}(d_{4})\geq n)$ \\ 
$d_{4}\equiv -a_{4}$ mod $\mathfrak{p}$ (so $v(a_{4}+d_{4})>0$).%
\end{tabular}%
\newline
So, if we let $a_{i}^{\prime }=a_{i}+d_{i},$ $i\in \left\{ 2,3,4\right\} $,
for $n$ big enough we get:\newline
$E^{\prime }:y^{2}+a_{1}^{\prime }xy+a_{3}^{\prime }y=x^{3}+a_{2}^{\prime
}x^{2}+a_{4}^{\prime }x+a_{6}$ is \textit{close to} $\mathcal{E}/\mathcal{K}$
(Proposition \ref{Prop courbe proche}).\newline
Futhermore :%
\begin{tabular}[t]{l}
$\bullet $ $c_{4}^{\prime }=(a_{1}^{\prime 2}+4a_{2}^{\prime
})^{2}-24a_{1}^{\prime }a_{3}^{\prime }-48a_{4}^{\prime }$ \\ 
$\bullet $ $v(a_{1}^{\prime })>0$ \\ 
$\bullet $ $v(a_{3}^{\prime })>0$ \\ 
$\bullet $ $v(a_{4}^{\prime })>0$ \\ 
$\bullet $ $v(a_{2}^{\prime })=0$,%
\end{tabular}%
\newline
so $v(c_{4}^{\prime })=0$.\medskip \newline
The curve $E^{\prime }:y^{2}+a_{1}^{\prime }xy+a_{3}^{\prime
}y=x^{3}+a_{2}^{\prime }x^{2}+a_{4}^{\prime }x+a_{6}$ is \textit{close to} $%
\mathcal{E}/\mathcal{K}$, $\forall w\neq v_{0}$ above\textbf{\ }$2$ $%
w(c_{4}^{\prime })>0$, and $v(c_{4}^{\prime })=0.$ Since $c_{4}^{\prime }$
does not depend on $a_{6},$ we can modify $a_{6}$ to allow places $w\neq
v_{0}$ such that $w(c_{4}^{\prime })>0$ to become places of good reduction
(since $c_{4}^{\prime }$ will be unchanged, some places of good reduction
can become of multiplicative reduction but not of additive reduction) and
such that $v$ is of multiplicative reduction ($v(j(E))<0$).\medskip \newline
\textbf{Step 3}. \textbf{Turn additive reduction places into good reduction
ones and make }$v$\textbf{\ multiplicative.}\newline
Let $v_{1},...,v_{r},v_{r+1},...,v_{t}$ be the places where $%
v_{i}(c_{4}^{\prime })>0,$ $v_{i}\neq v_{0}$ ($\neq v$ and not above $2$).%
\newline
Above, $v_{1},...,v_{r}$ are places of good reduction and $v_{r+1},...,v_{t}$
places of additive reduction of the curve $E^{\prime }$ constructed in step
2.\newline
Let $b_{2},$ $b_{4},$ $b_{6},$ $b_{8}$ and $\Delta $ be the following
classical quantities associatied to $E^{\prime }$:\newline
\begin{tabular}[t]{l}
$b_{2}=a_{1}^{\prime 2}+4a_{2}^{\prime }$ \\ 
$b_{4}=2a_{4}^{\prime }+a_{1}^{\prime }a_{3}^{\prime }$ \\ 
$b_{6}=a_{3}^{\prime 2}+4a_{6}$ \\ 
$b_{8}=a_{1}^{\prime 2}a_{6}+4a_{2}^{\prime }a_{6}-a_{1}^{\prime
}a_{3}^{\prime }a_{4}^{\prime }+a_{2}^{\prime }a_{3}^{\prime
2}-a_{4}^{\prime 2}$%
\end{tabular}%
\newline
and $\Delta 
\begin{tabular}[t]{l}
$=-b_{2}^{2}b_{8}-8b_{4}^{3}-27b_{6}^{2}+9b_{2}b_{4}b_{6}$ \\ 
$=\alpha +\beta .a_{6}+16.a_{6}^{2}$, \\ 
where $\alpha =[-b_{2}^{2}(-a_{1}^{\prime }a_{3}^{\prime }a_{4}^{\prime
}+a_{2}^{\prime }a_{3}^{\prime 2}-a_{4}^{\prime
2})-8b_{4}^{3}-27a_{3}^{\prime 4}+9b_{2}b_{4}a_{3}^{\prime 2}]$ \\ 
and $\beta =[-b_{2}^{3}-216a_{3}^{\prime 2}+36b_{2}b_{4}]$%
\end{tabular}%
$\newline
Let $\gamma =\beta +32.a_{6}$; we know that $16$ is invertible mod $%
\mathfrak{p}_{i}$ $\forall i\in \left\{ 1,..,t\right\} $ (because $\mathfrak{%
p}_{i}$ is not above 2).\smallskip \newline
By the Chinese remainder theorem, there exists $c$ such that:\newline
$\bullet $ $c\equiv 0$ mod $\mathfrak{p}_{0}^{n}$ (i.e. $v_{0}(c)\geq n)$%
\newline
$\bullet $ $c\equiv 0$ mod $\mathfrak{p}_{i}$ $\forall i\in \left\{
1,..,r\right\} $ (i.e. $v_{i}(c)>0$)\newline
$\bullet $ $16c\equiv \alpha _{i}-\gamma $ mod $\mathfrak{p}_{i}$ $\forall
i\in \left\{ r+1,..,t\right\} $ (where $\alpha _{i}\neq 0,\gamma $ mod $%
\mathfrak{p}_{i})$\newline
(i.e. $\forall i\in \left\{ r+1,..,t\right\} ,$ $v_{i}(\gamma +16c)=0$ and $%
v_{i}(c)=0$)\newline
$\bullet $ $c\equiv -a_{6}$ mod $\mathfrak{p}$ (i.e. $v(a_{6}^{\prime })>0)$.%
\newline
Finally, if we let $a_{6}^{\prime }=a_{6}+c$ for $n$ big enough, we get:%
\newline
$E^{\prime \prime }:y^{2}+a_{1}^{\prime }xy+a_{3}^{\prime
}y=x^{3}+a_{2}^{\prime }x^{2}+a_{4}^{\prime }x+a_{6}^{\prime }$\newline
and we see that with this choice:\newline
- $v_{1},...,v_{t}$ are all places of good reduction for $E^{\prime \prime
}. $\newline
- $v$ is a place of multiplicative reduction for $E^{\prime \prime }.$%
\newline
This completes the proof.
\end{proof}

\medskip

\textbf{Acknowledgements: }First of all, I would like to thank my advisor
Jan Nekov\'{a}\v{r}, for suggesting to me this topic, for his guidance along
the work and his careful reading of the different versions of this paper. I
would also like to thank Vladimir and Tim Dokchitser, the first one for his
responses to my questions about their articles, the second one for his
advice.


\begin{thebibliography}{99}
\bibitem{Bil} N.Billerey: D\'{e}faut de semi-stabilit\'{e} des courbes
elliptiques, preprint,\newline
http://people.math.jussieu.fr/${\sim }$billerey/.

\bibitem{Del} P.Deligne: Les constantes des \'{e}quations fonctionnelles des
fonctions $L,$ Modular functions of one variable, II (Proc. Internat. Summer
School, Univ. Antwerp, Antwerp, 1972), 501--97. Lecture Notes in Math., Vol.
349, Springer, Berlin, 1973.

\bibitem{Dok1} T. and V.Dokchitser: Self-duality of Selmer groups, Math.
Proc. Cam. Phil. Soc. 146 (2009), 257--267.

\bibitem{Dok2} T. and V.Dokchitser: Regulator constants and the parity
conjecture, Arxiv: 0709.2852v3, to appear in Invent. Math.

\bibitem{Dok3} T. and V.Dokchitser: Roots numbers and parity of ranks of
elliptics curves, Arxiv: 0906.1815v1.

\bibitem{Kraus} A.Kraus: On le d\'{e}faut de semi-stabilit\'{e} des courbes
elliptiques \`{a} r\'{e}duction additive, Manuscripta Math. 69 (1990),
353--380.

\bibitem{Nek1} J.Nekov\'{a}\v{r}: On the parity of ranks of Selmer groups
III, Documenta Math. 12 (2007), 243--274.

\bibitem{Nek2} J.Nekov\'{a}\v{r}: On the parity of ranks of Selmer groups
IV, Compositio Math. 145 (2009), 1351--1359 (with an appendix by J.-P.
Wintenberger).

\bibitem{Roh1} D.Rohrlich: Weil-Deligne group and elliptic curves.in:
Elliptic curves and related topics, 125--157, CRM Proc. Lecture Notes 4,
AMS, Providence, RI, 1994.

\bibitem{Roh2} D.Rohrlich: Galois theory, elliptic curves and roots numbers,
Comp. Math. 100, n$%
{{}^\circ}%
3$ (1996), 311--349.

\bibitem{Roh3} D.Rohrlich: Galois invariance of local root numbers, preprint,%
\newline
http://math.bu.edu/people/rohrlich/.

\bibitem{Ser1} J.P.Serre: Repr\'{e}sentations lin\'{e}aires des groupes
finis, Hermann.

\bibitem{Silv1} J.H.Silverman: The Arithmetic of Elliptic Curves, GTM 106,
Springer-Verlag

\bibitem{Silv2} J.H.Silverman: Advanced topics in the Arithmetic of Elliptic
Curves, GTM 151, Springer-Verlag.

\bibitem{Tate1} J.Tate: Number: Theoretic Background in: Automorphic forms,
representations and $L$-functions, Part 2 (ed A.Borel and W.Casselman),
3--26, Proc. Symp. in Pure Math. 33 (AMS, Providence, RI, 1979).

\bibitem{Wint} J.P.Wintenberger: Potential modularity of elliptic curves
over totally real fields, appendix to \cite{Nek2}.
\end{thebibliography}
\end{document}